\newenvironment{enumerate*}{
\begin{enumerate}[{\rm (i)}]
  \setlength{\itemsep}{3.5pt}
  \setlength{\parskip}{0pt}
}{\end{enumerate}}
\newenvironment{enumerate!}{
\begin{enumerate}[{\rm I.}]
  \setlength{\itemsep}{3.5pt}
  \setlength{\parskip}{0pt}
}{\end{enumerate}}
\newenvironment{enumeratenum}{
\begin{enumerate}[{\rm (1)}]
  \setlength{\itemsep}{3.5pt}
  \setlength{\parskip}{0pt}
}{\end{enumerate}}
\numberwithin{equation}{section}
\newtheorem{theorem}{Theorem}[section]
\newtheorem{lemma}[theorem]{Lemma}
\newtheorem{proposition}[theorem]{Proposition}
\newtheorem{corollary}[theorem]{Corollary} 
\newtheorem*{definition}{Definition}
\newtheorem{example}[theorem]{Example}
\newtheorem*{remark}{Remark}
\newtheorem*{notation}{Notation and terminology}
\newtheorem*{acknowledgements}{Acknowledgements}
\newtheorem*{ethical approval}{Ethical approval}
\newtheorem{openquestions}[theorem]{Open Questions}
\begin{document}
\author{
{\sc Karl Lorensen}\\
{\sc Johan \"Oinert}
}

\maketitle

\begin{abstract} Let $R$ be a ring, $\sigma:R\to R$ a ring endomorphism, and $\delta:R\to R$ a $\sigma$-derivation. We establish that the Ore extension $R[x;\sigma,\delta]$ satisfies the rank condition if and only if $R$ does. In addition, we prove analogous results for the right and left strong rank conditions. However, in the right case, the ``if" part requires the hypothesis that $\sigma$ is an automorphism, whereas, in the left case, this assumption is needed for the ``only if" part.  Finally, we provide a new proof of an old result of Susan Montgomery stating that a skew power series ring is  directly (respectively, stably) finite if and only if its coefficient ring is directly (respectively, stably) finite. 

\vspace{10pt}

\noindent {\bf Mathematics Subject Classification (2020)}:  16P99, 16S32, 16S36, 16S99, 16W25, 16W70
\vspace{5pt}

\noindent {\bf Keywords}:  Ore extension, skew polynomial ring, differential polynomial ring, filtered ring, rank condition, unbounded generating number, strong rank condition, directly finite, Dedekind finite, von Neumann finite, stably finite, weakly finite, Weyl ring, upper triangular matrices, lower triangular matrices
\end{abstract}

\section{Introduction}

In 1933, \O{}ystein Ore published a seminal paper \cite{Ore} expounding a general theory of noncommutative polynomials. The structures that he introduced, now referred to as Ore extensions, have come to occupy a position of central importance in the theory of noncommutative rings and have been applied fruitfully in a diverse array of fields, including quantum mechanics and coding theory (see, for instance, \cite{Kassel}, \cite{Boucher}, and \cite{Boucher2}).  

To construct an Ore extension, we begin with an arbitrary ring $R$ and ring endomorphism $\sigma:R\to R$. In addition, we require a \emph{$\sigma$-derivation} $\delta:R\to R$, that is, an additive map fulfilling the condition
\begin{equation*}\delta(rs)=\delta(r)s+\sigma(r)\delta(s)\end{equation*}
for all $r, s\in R$. The \emph{Ore extension} of $R$ with respect to $\sigma$ and $\delta$, denoted $R[x; \sigma, \delta]$, is the ring consisting of all polynomials in the variable $x$, with coefficients in $R$ written on the left, such that the multiplication is governed by the rule
\begin{equation*} xr=\sigma(r)x + \delta(r)\end{equation*}
for all $r\in R$.  For two different proofs of the existence of such a ring structure on the set of left polynomials over a ring, we refer the reader to \cite{Bergman} and \cite[\S 1.1]{Cohn} (also \cite[\S 2.3]{NNR}).  

If $\delta=0$, then $R[x;\sigma,\delta]$ is called the \emph{skew polynomial ring} over $R$ with respect to $\sigma$ and written $R[x;\sigma]$. The skew polynomial rings form an important subclass of the class of Ore extensions, as they are structurally much simpler than other Ore extensions and yet still differ markedly from ordinary polynomial rings. 

The aim of the current paper is to investigate the relationship between the properties of $R[x;\sigma,\delta]$ and those of its coefficient ring $R$. Specifically, we wish to discover properties that are always inherited by $R[x;\sigma,\delta]$ from $R$, as well as ones that, conversely, $R$ inherits from $R[x;\sigma,\delta]$. A very straightforward result of this kind is that, if $\sigma$ is injective, then $R[x;\sigma,\delta]$ is a domain if and only if $R$ is a domain (see \cite[Theorem 2.9(i)]{NNR}). This reference contains a further result in this vein, namely, that, if $\sigma$ is an automorphism, then $R[x;\sigma,\delta]$ is right (respectively, left) Noetherian if and only if $R$ is right (respectively, left) Noetherian \cite[Theorem 2.9(iv)]{NNR}. Also, C. W. Curtis \cite{Curtis} proved in 1952 that, if $\sigma$ is injective, then $R[x;\sigma,\delta]$ is a left Ore domain if $R$ is a left Ore domain (see also  \cite[Proposition 1.1.4]{Cohn}). Moreover, it follows from T. Y. Lam's reasoning for skew polynomial rings in \cite[Theorem 10.28]{Lam} that the converse of Curtis's result holds if $\sigma$ is assumed to be an automorphism. 

The principal ring-theoretic properties that we examine are the rank condition, the left strong rank condition, and the right strong rank condition. Recall that a ring $R$ satisfies the \emph{rank condition} if, for every $n\in \mathbb Z^+$, there is no right $R$-module epimorphism $R^n\to R^{n+1}$, or, equivalently, no left $R$-module epimorphism $R^n\to R^{n+1}$. Another characterization of this property is that, for every $n\in \mathbb Z^+$, there exists a finitely generated right (equivalently, left) $R$-module that cannot be generated by $n$ elements. For this reason, rings that fulfill the rank condition are often described as having \emph{unbounded generating number}. 

A ring $R$ satisfies the \emph{right strong rank condition} (RSRC) if, for every positive integer $n$, there is no right $R$-module monomorphism $R^{n+1}\to R^n$. 
Analogously, a ring $R$ satisfies the \emph{left strong rank condition} (LSRC) if, for every $n\in \mathbb Z^+$, there is no left $R$-module monomorphism $R^{n+1}\to R^n$. These two properties are not equivalent; this is illustrated by Lemmas~1.3 and 1.10 below. 
It is easy to see that, as indicated by their names, both the right and left strong rank conditions imply the rank condition. Also, both strong rank conditions are, for nonzero rings, generalizations of commutativity (see \cite[Corollary 1.38]{Lam}), as well as, obviously, the property of being a division ring. Moreover, RSRC and LSRC are easily seen to generalize the right and left Noetherian properties, respectively (see \cite[Theorem 1.35]{Lam}).  
 
 Introduced by P. M. Cohn \cite{CohnIBN} in 1966, the rank condition has been studied intermittently since then; see, for instance, \cite{Lam}, \cite{CohnSkew}, \cite{Haghany}, \cite{Cohn},  \cite{Abrams}, \cite{LO}, and \cite{LO2}. The two strong rank conditions were, we believe, first formulated by Lam \cite{Lam} in 1999. For the next twenty years, they received barely any further mention in the literature, a notable exception being \cite{Haghany}. Recently, however, they have garnered much more attention because of their connection to the property of amenability for groups and algebras (see \cite{Bartholdi}, \cite{KropLor}, and \cite{LO2}).   

We prove two theorems about the above three properties for Ore extensions;  the first, stated immediately below, pertains to the rank condition.

\begin{theorem} Let $R$ be a ring, $\sigma:R\to R$ a ring endomorphism, and $\delta:R\to R$ a $\sigma$-derivation. Then $R[x;\sigma,\delta]$ satisfies the rank condition if and only if $R$ does.
\end{theorem}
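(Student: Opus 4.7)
Let $S := R[x;\sigma,\delta]$. Because the rank condition is left/right symmetric (as recalled in the introduction), I work throughout with left modules. The key structural fact is that $S$ is free as a left $R$-module with basis $\{1,x,x^2,\ldots\}$; writing $F_kS = R+Rx+\cdots+Rx^k$, each $F_kS$ is a free left $R$-submodule of rank $k+1$, and the commutation rule $xr=\sigma(r)x+\delta(r)$ yields, by induction on degree, the filtration property $F_aS\cdot F_bS\subseteq F_{a+b}S$.

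The \emph{only if} direction is straightforward. Given a left $R$-module epimorphism $R^n\to R^{n+1}$, apply the right-exact functor $S\otimes_R(-)$; since $S\otimes_RR^k = S^k$ as left $S$-modules, this yields a left $S$-module epimorphism $S^n\to S^{n+1}$. Contrapositively, the rank condition of $S$ implies that of $R$.

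For the \emph{if} direction, suppose for contradiction that $\phi\colon S^n\to S^{n+1}$ is a left $S$-module epimorphism and let $e_1,\ldots,e_{n+1}$ be the standard basis of $S^{n+1}$. Choose $v_1,\ldots,v_{n+1}\in S^n$ with $\phi(v_i)=e_i$ and let $d$ be the maximum of the coordinate-wise $x$-degrees of the $v_i$. Then $F_kS\cdot v_i\subseteq(F_{k+d}S)^n$, and the $S$-linearity of $\phi$ gives
\begin{equation*}
\phi\bigl((F_{k+d}S)^n\bigr)\;\supseteq\;\sum_{i=1}^{n+1}F_kS\cdot e_i\;=\;(F_kS)^{n+1}.
\end{equation*}
Hence $\phi$ restricts to a surjective left $R$-module homomorphism from a free module of rank $n(k+d+1)$ onto one of rank $(n+1)(k+1)$. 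Since $(n+1)(k+1)-n(k+d+1)=k+1-nd$, taking any $k\geq nd$ produces a left $R$-module epimorphism from a free $R$-module to one of strictly larger rank. A standard direct-sum-with-identity followed by projection then upgrades this to a left $R$-module epimorphism $R^N\to R^{N+1}$ for some $N$, contradicting the rank condition of $R$.

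I do not anticipate any serious obstacle. The argument rests only on the canonical $R$-linear filtration of $S$, the left/right symmetry of the rank condition, and an elementary arithmetic comparison of ranks. The one step requiring care is the inclusion $F_aS\cdot F_bS\subseteq F_{a+b}S$, which is immediate from the Ore commutation rule and is the sole place where the Ore structure (as opposed to merely the $\mathbb Z$-graded left $R$-module decomposition of $S$) is used.
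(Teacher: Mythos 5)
Your argument is essentially the paper's: the paper proves a general statement about filtered rings (Proposition 1.8) whose hypotheses are exactly your filtration $F_kS=R+Rx+\cdots+Rx^k$ with $F_aS\cdot F_bS\subseteq F_{a+b}S$ and each quotient free of rank one, and then runs the same degree-bound-plus-rank-count computation, with the same arithmetic threshold for $k$. The one place you are imprecise is the claim that $\phi$ ``restricts to a surjective homomorphism onto'' $(F_kS)^{n+1}$: the image of $(F_{k+d}S)^n$ merely \emph{contains} $(F_kS)^{n+1}$ and need not be contained in it, so you must compose with the left $R$-linear projection $S^{n+1}\to (F_kS)^{n+1}$ (legitimate because $F_kS$ is a free direct summand of $S$ as a left $R$-module); this is exactly the map $\pi^m_l$ in the paper's proof, and with that insertion your proof is complete.
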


With the strong rank condition, the situation is more complicated: the hypothesis that $\sigma$ is an automorphism is required to ensure the property's inheritability in two instances; also, 
there are discrepancies between the conclusions that can be drawn for the left and right versions, respectively, of the property.  
Our results are summarized in Theorem 0.2 directly below.  

\begin{theorem} Let $R$ be a ring, $\sigma:R\to R$ a ring endomorphism, and $\delta:R\to R$ a $\sigma$-derivation. Then the following four statements hold.
\begin{enumerate*}
\item If $R$ satisfies LSRC, then $R[x;\sigma, \delta]$ satisfies LSRC.
\item If $\sigma$ is an automorphism and $R$ satisfies RSRC, then $R[x;\sigma, \delta]$ satisfies RSRC.
\item If $R[x;\sigma, \delta]$ satisfies RSRC, then $R$ satisfies RSRC.  
\item If $\sigma$ is an automorphism and $R[x;\sigma, \delta]$ satisfies LSRC, then $R$ satisfies LSRC.
\end{enumerate*}
\end{theorem}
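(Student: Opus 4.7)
The plan is to split the four parts according to technique: (iii) and (iv) form the ``easier'' direction, handled by flat base change, while (i) and (ii) require a degree-filtration argument. Throughout, set $S := R[x;\sigma,\delta]$ and proceed contrapositively. For (iii), given a right $R$-module monomorphism $R^{n+1} \to R^n$, I would apply $- \otimes_R S$; since $S$ is free (hence flat) as a left $R$-module with basis $\{1, x, x^2, \ldots\}$, the induced map $S^{n+1} \to S^n$ is a right $S$-module monomorphism. For (iv), the symmetric application of $S \otimes_R -$ to a left $R$-module monomorphism $R^{n+1} \to R^n$ requires $S$ to be flat as a right $R$-module; here the hypothesis that $\sigma$ is an automorphism enters through the identity $rx = x\sigma^{-1}(r) - \delta(\sigma^{-1}(r))$, which lets every polynomial be rewritten uniquely as $\sum_i x^i r_i$ and thus exhibits $S$ as a free right $R$-module on $\{x^i\}_{i\geq 0}$.

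For (i), I would take a left $S$-module monomorphism $\phi : S^{n+1} \to S^n$, represent it by right multiplication by an $(n+1) \times n$ matrix $A$ over $S$, and let $d$ be the maximum degree of the entries of $A$. Writing $S_N$ for the set of polynomials of degree at most $N$, the bound $\deg(fg) \leq \deg(f) + \deg(g)$ confines $\phi(S_N^{n+1})$ inside $S_{N+d}^n$; and since $S_N$ is a free left $R$-module of rank $N+1$, the restriction is a left $R$-module monomorphism between free modules of ranks $(N+1)(n+1)$ and $(N+d+1)n$. For $N \geq dn$ the former strictly exceeds the latter, so passing to a direct summand yields a monomorphism $R^{m+1} \to R^m$, contradicting LSRC of $R$. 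Part (ii) runs in parallel: when $\sigma$ is an automorphism, $S_N$ is simultaneously a free \emph{right} $R$-module of rank $N+1$ (via the rewriting from (iv)), and the same degree bound produces a contradiction with RSRC of $R$.

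The main technical point I anticipate is verifying that the degree filtration interacts cleanly with the right-module structure in (ii): I must check that the degree coming from the representation $\sum_i x^i r_i$ agrees with the usual degree, so that $S_N$ has the same description on either side, and that multiplication remains submultiplicative with respect to this common degree. Once that is settled, the need for the automorphism hypothesis in both (ii) and (iv) is transparent, since without $\sigma^{-1}$ the ``right'' representation of polynomials is unavailable and the symmetric arguments break down.
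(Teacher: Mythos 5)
Your proposal is correct and follows essentially the same route as the paper: parts (i) and (ii) are handled there by the same degree-filtration rank count (Proposition 1.9(i), applied to the filtration $U_N$ of Lemmas 1.4 and 1.7, with the identical inequality $(N+1)(n+1) > (N+d+1)n$), and parts (iii) and (iv) by the freeness of $R[x;\sigma,\delta]$ as a left (respectively, when $\sigma$ is an automorphism, right) $R$-module, which is your flat base change argument packaged as a citation to \cite[Proposition 2.14]{LO2}. The only cosmetic difference is that the paper abstracts the counting argument into a statement about general filtered rings rather than working directly with degrees of polynomials.
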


To establish both theorems, we employ the notion of a filtration of a ring. With this approach, Theorems 0.1 and 0.2 become corollaries of two more general results about rings with special kinds of filtrations, Propositions 1.8 and 1.9, respectively. 

We point out that a few particular cases and parts of these theorems are already well known and straightforward to prove. For example, the ``only if" part of Theorem 0.1 is trivial since $R$ is a unital subring of $R[x;\sigma,\delta]$. The case of the converse for a skew polynomial ring is also trivial since there is a canonical ring homorphism from such a ring to its coefficient ring. Moreover, the case of Theorem 0.2 for the standard polynomial ring $R[x]$ is an exercise in Lam's book \cite[\S1, Exercise 22]{Lam}. 

As shown in \cite[Theorem~10.21]{Lam}, a domain satisfies RSRC (respectively, LSRC) if and only if it is right
 (respectively, left) Ore. Consequently, parts (i) and (iv) of Theorem~0.2 generalize the results of Curtis and Lam, respectively, about left Ore domains described in the last two sentences of the fourth paragraph of this introduction.

The paper also investigates whether the hypothesis that $\sigma$ is an automorphism in assertions (ii) and (iv) in Theorem 0.2 can perhaps be weakened. A result of Lam \cite{Lam} (Lemma 1.11 below) shows that this condition in Theorem 0.2(ii) cannot be replaced by the assumption that $\sigma$ is an injective endomorphism.  
 However, it remains, in general, unknown whether $\sigma$ being a surjective endomorphism might suffice instead. Regarding this question, we establish the following partial result. 
  
  \begin{corollary} Let $R$ be a domain, $\sigma:R\to R$ a surjective ring endomorphism, and $\delta:R\to R$ a $\sigma$-derivation. If $R$ satisfies RSRC, then so does $R[x;\sigma, \delta]$. 
   \end{corollary}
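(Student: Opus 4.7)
The plan is to split the proof into two cases: if $\sigma$ is an automorphism, Theorem~0.2(ii) applies directly; if $\sigma$ is surjective but not injective, I will run a rank argument over the right division ring of fractions of $R$.

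For the second case, the starting point is that, since $R$ is a domain satisfying RSRC, \cite[Theorem~10.21]{Lam} supplies a classical right division ring of fractions $D$ for $R$, and $D$ is flat as a left $R$-module by standard Ore-localization theory. Thus the assignment $\mathrm{rk}(M) := \dim_D(M \otimes_R D)$ is additive on short exact sequences of right $R$-modules. I then plan to equip $S := R[x;\sigma,\delta]$ with the degree filtration $S_n := \{f \in S : \deg f \leq n\}$ by right $R$-submodules. The leading-coefficient map should identify $S_n/S_{n-1}$ with the abelian group $R$ carrying the right action $a \cdot r := a\sigma^n(r)$; since $\sigma$ is surjective, this twisted module is cyclic with annihilator $\ker\sigma^n$, and is therefore isomorphic to $R/\ker\sigma^n$ as a right $R$-module. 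Because $\sigma$ is not injective, $\ker\sigma^n$ contains the nonzero ideal $\ker\sigma$ for every $n \geq 1$, and any nonzero ideal of an Ore domain is annihilated by tensoring with $D$. I can therefore conclude by induction on $n$ that $\mathrm{rk}(S_n) = 1$ for every $n \geq 0$.

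To finish, I suppose for contradiction that there is an injection $g : S^{m+1} \hookrightarrow S^m$ of right $S$-modules, and set $d := \max_i \deg g(e_i)$, where $\{e_i\}$ denotes the standard basis. Right $S$-linearity of $g$ forces $g(S_k^{m+1}) \subseteq S_{d+k}^m$ for every $k \geq 0$, and the restriction remains injective and right $R$-linear. Applying the exact functor $- \otimes_R D$ yields an injection of left $D$-vector spaces of dimensions $(m+1)\mathrm{rk}(S_k) = m+1$ and $m\,\mathrm{rk}(S_{d+k}) = m$, which is absurd. The heart of the argument, and the main point to verify with care, is the rank collapse $\mathrm{rk}(S_n) = 1$; this relies on surjectivity of $\sigma$ (to identify each $S_n/S_{n-1}$ with $R/\ker\sigma^n$) in tandem with its non-injectivity (to make each such quotient torsion and hence killed by $D$).
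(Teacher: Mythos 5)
Your proposal is correct, but the non-injective case runs along a genuinely different track from the paper's. The paper also splits into the same two cases; when $\sigma$ is injective (hence an automorphism) it invokes Theorem 0.2(ii) exactly as you do. For the non-injective case, however, the paper proves a more general statement (Proposition 1.12, via Lemma 1.11): if $\ker\sigma$ contains an element $a$ that is not a left zero divisor, then RSRC passes from $R$ to $S=R[x;\sigma,\delta]$, with no domain hypothesis and no use of surjectivity. The mechanism there is elementary: since $x^ia^i\in R$, right-multiplying a putative embedding $S^{n+1}\to S^n$ by $a^k$ compresses its image into $R^n$, RSRC for $R$ produces a kernel vector, and the non-zero-divisor hypothesis keeps $a^k{\bf b}$ nonzero. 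You instead pass to the classical right division ring of fractions $D$ (available precisely because a domain with RSRC is right Ore), use flatness of $D$ to make $\mathrm{rk}(-)=\dim_D(-\otimes_RD)$ additive, and show each filtration quotient $S_n/S_{n-1}$ ($n\geq 1$) is torsion, so $\mathrm{rk}(S_n)=1$ and the count $(m+1)\leq m$ gives the contradiction; this checks out. Two small observations: the spaces $-\otimes_RD$ are \emph{right} $D$-vector spaces, not left ones; and your final remark overstates the role of surjectivity, since once $\sigma$ is non-injective every element of the twisted module $S_n/S_{n-1}$ is annihilated on the right by any nonzero $a\in\ker\sigma$, so it is torsion and killed by $-\otimes_RD$ whether or not it is cyclic over $R$ -- your case-2 argument, like the paper's, really only needs non-injectivity. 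The trade-off is that the paper's route is more elementary and more general (no domain needed), while yours makes the geometric ``rank collapse'' of the filtration visible and would adapt to other filtered rings over Ore domains.
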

  
  A similar dilemma arises with regard to statement (iv) of Theorem 0.2. In Example 1.14, we demonstrate that the hypothesis that $\sigma$ is an automorphism in (iv) cannot be replaced by $\sigma$ being merely an injective endomorphism. However, the question of whether the assumption that $\sigma$ is a surjective endomorphism might suffice is left unanswered. 

 The third theorem that we include in the paper is essentially due to M. S. Montgomery, although we provide an alternative proof. The result concerns the ring-theoretic properties of direct finiteness and stable finiteness, which are closely related to the rank condition but much more widely studied. 
Recall that a ring $R$ is \emph{directly finite} (alternatively, \emph{Dedekind finite} or \emph{von Neumann finite}) if, for any $r, s\in R$, $rs=1$ implies that $sr=1$. Moreover, a ring $R$ is \emph{stably finite} (alternatively, \emph{weakly finite}) if the ring of $n\times n$ matrices over $R$ is directly finite for every positive integer $n$. P.~Malcolmson proved \cite{Malcolmson} that a ring fulfills the rank condition if and only if it possesses a nonzero quotient that is stably finite (see also \cite[Theorem~1.29]{Lam}). 
 
 In contrast to the rank condition, neither stable finiteness nor direct finiteness is always inherited by Ore extensions (see Example 1.16). Nevertheless, both properties are always preserved by forming skew polynomial rings and, more generally, skew power series rings. This was shown for ordinary power series rings by Montgomery in \cite[Corollary 2]{Montgomery}, employing the Jacobson radical (see also \cite[\S1, Exercises 13, 15]{Lam}). Moreover, her argument applies equally to skew power series rings. 

 \begin{theorem}[{Montgomery}] Let $R$ be a ring and $\sigma:R\to R$ a ring endomorphism. Then the skew power series ring $R[[x;\sigma]]$ is directly (respectively, stably) finite if and only if $R$ is directly (respectively, stably) finite.
 
 Consequently, the skew polynomial ring $R[x;\sigma]$ is directly (respectively, stably) finite if and only if $R$ is directly (respectively, stably) finite.
 
 \end{theorem}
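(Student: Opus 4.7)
The plan is to reduce both equivalences to the standard lemma that an element $f = \sum_{i\ge 0} a_i x^i$ of $R[[x;\sigma]]$ whose constant term $a_0$ is a unit in $R$ is itself a unit. To prove this lemma, write $f = a_0(1 + y)$ with $y = a_0^{-1}\sum_{i \ge 1} a_i x^i \in xR[[x;\sigma]]$; the twisted multiplication $xa = \sigma(a)x$ yields $x^i b = \sigma^i(b) x^i$, which by an immediate induction gives $y^k \in x^k R[[x;\sigma]]$ for every $k$. Hence the partial sums of $\sum_{k\ge 0}(-y)^k$ stabilize coefficient-by-coefficient and define an inverse of $1+y$ in $R[[x;\sigma]]$.

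With the lemma in hand, the ``only if" direction follows from the observation that the constant-term map $\varepsilon: R[[x;\sigma]] \to R$ is a surjective ring homomorphism (the identity $(fg)_0 = a_0 b_0$ is immediate), and both direct and stable finiteness descend to quotients. For the ``if" direction of the direct-finiteness statement, assume $R$ is directly finite and suppose $fg = 1$ in $R[[x;\sigma]]$, with $f = \sum a_i x^i$ and $g = \sum b_i x^i$. Comparing constant terms yields $a_0 b_0 = 1$, so $b_0 a_0 = 1$ by direct finiteness of $R$. Thus $gf$ has constant term $1$ and is therefore a unit by the lemma, with inverse $h$ say. Then $(hg)f = h(gf) = 1$, so $f$ has both a left inverse $hg$ and a right inverse $g$; these necessarily coincide, giving $g = f^{-1}$ and $gf = 1$.

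For the stable-finiteness ``if" direction I would pass to matrices via the ring isomorphism $M_n(R[[x;\sigma]]) \cong M_n(R)[[x;\widetilde\sigma]]$, where $\widetilde\sigma$ acts entrywise, and then invoke the direct-finiteness equivalence already proved with $R$ replaced by $M_n(R)$. The skew polynomial corollary is immediate because $R[x;\sigma]$ is a subring of $R[[x;\sigma]]$: any relation $fg = 1$ in $R[x;\sigma]$ holds a fortiori in $R[[x;\sigma]]$, forcing $gf = 1$ there and therefore in $R[x;\sigma]$, and the same argument works entrywise for stable finiteness. The only delicate point in the whole argument is the verification of the constant-term lemma under the twisted multiplication; this sidesteps Montgomery's explicit use of the Jacobson radical by giving the inverse directly through the geometric-series formula, with convergence governed by the $x$-adic filtration.
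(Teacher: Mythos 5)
Your argument is correct in its main thrust but takes a genuinely different route from the paper's, and one justification needs repair. The paper proves Theorem 0.4 from its Proposition 0.5: the only idempotent of $R[[x;\sigma]]$ with constant term $1$ is $1$ itself, established by comparing the coefficient of $x^k$ in $e$ and $e^2$ for the least $k\geq 1$ with $r_k\neq 0$, which forces $r_k=2r_k=0$. Then, for $pq=1$ with $R$ directly finite, $qp$ is an idempotent with constant term $\pi_0(q)\pi_0(p)=1$, so $qp=1$. You instead prove the unit lemma (unit constant term implies unit, via the geometric series for $(1+y)^{-1}$ with $y$ of positive order) and deduce from $b_0a_0=1$ that $gf$ is invertible, hence that $f$ has a left inverse as well as the right inverse $g$, whence $gf=1$. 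Both arguments are elementary and correct; yours is essentially Montgomery's original mechanism made explicit (your unit lemma is the concrete content of her appeal to the Jacobson radical), while the paper's idempotent trick avoids series manipulation and convergence entirely. One notational slip: since $xb=\sigma(b)x$, a series of order at least $k$ lies in $R[[x;\sigma]]\,x^k$ rather than in $x^kR[[x;\sigma]]$ (the latter forces coefficients into the image of $\sigma^k$); your convergence argument only needs that $y^k$ has order at least $k$, which is what your induction actually establishes.

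The one genuine flaw is your justification of the ``only if'' direction: direct and stable finiteness do \emph{not} descend to quotients. For example, the free algebra $\mathbb{Z}\langle u,v\rangle$ is a domain, hence stably finite, yet its quotient by the ideal generated by $uv-1$ satisfies $uv=1$ and $vu\neq 1$. The correct (and equally immediate) argument, which is the one the paper uses, is that $R$ is a unital subring of $R[[x;\sigma]]$ and both properties pass to unital subrings; alternatively, $R$ is a retract of $R[[x;\sigma]]$ via your constant-term map $\varepsilon$, and the properties do pass to retracts, but surjectivity of $\varepsilon$ alone does not suffice.
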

 
We adopt a different approach than Montgomery to prove Theorem 0.4, obtaining the result instead as a consequence of the following proposition, which may be of independent interest.   
 
 \begin{proposition} Let $R$ be a ring and $\sigma:R\to R$ a ring homomorphism. Then the unit element is the only idempotent in $R[[x;\sigma]]$ that has constant term $1_R$.  
 \end{proposition}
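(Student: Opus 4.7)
The plan is to do a direct coefficient-by-coefficient computation and induction on the coefficient index. Write an arbitrary element of $R[[x;\sigma]]$ with constant term $1_R$ as
\[ f = 1 + \sum_{n\geq 1} a_n x^n, \]
set $a_0 = 1_R$, and expand $f^2$ using the rule $x^i r = \sigma^i(r) x^i$, which extends to $R[[x;\sigma]]$. The coefficient of $x^n$ in $f^2$ is
\[ \sum_{i=0}^{n} a_i\, \sigma^i(a_{n-i}), \]
and the idempotent condition $f^2 = f$ gives one such equation for each $n \geq 0$.

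For $n=0$ the equation reads $a_0 \sigma^0(a_0)=a_0$, which is satisfied automatically since $a_0=1_R$ and $\sigma$ is a ring homomorphism (so $\sigma(1_R)=1_R$ and hence $\sigma^i(1_R)=1_R$ for every $i$). Proceeding by induction on $n\geq 1$, suppose we have already shown $a_1=a_2=\cdots=a_{n-1}=0$. Then every term in the sum above with $1\leq i\leq n-1$ vanishes, since $a_i=0$, and the two remaining terms, corresponding to $i=0$ and $i=n$, are $a_0\sigma^0(a_n)=a_n$ and $a_n\sigma^n(a_0)=a_n\sigma^n(1_R)=a_n$ respectively. The equation $f^2=f$ in degree $n$ therefore collapses to
\[ 2a_n = a_n, \]
which forces $a_n = 0$ regardless of the characteristic of $R$. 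Hence $a_n=0$ for every $n\geq 1$, and $f = 1_R$.

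There is no real obstacle here; the only thing one has to be careful about is keeping track of the twists produced by $\sigma$ when multiplying in the power series ring, and observing that the hypothesis that the constant term equals $1_R$ is used twice in the inductive step (once in the $i=0$ summand and once, via $\sigma^n(1_R)=1_R$, in the $i=n$ summand) to produce the decisive $2a_n$ on the left side of the equation.
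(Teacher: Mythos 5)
Your proof is correct and follows essentially the same route as the paper's: both expand $f^2$ coefficientwise using $x^i r=\sigma^i(r)x^i$ and reduce the degree-$n$ equation to $2a_n=a_n$; the paper packages the argument as a minimal-counterexample (smallest index with nonzero coefficient) where you use induction, which is the same idea.
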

 
As an application of Theorem 0.4, we prove in Corollary 1.18 that the ring of upper triangular $\mathbb N\times \mathbb N$ matrices with entries in a directly (respectively, stably) finite ring is directly (respectively, stably) finite. 

 At the end of the paper, we list five open questions about Ore extensions pertaining to the ring-theoretic properties discussed in the paper.  
 \vspace{10pt}

 \begin{notation}
 \ \
 \vspace{5pt}
 
 {\rm  
The term \emph{ring} will always mean a unital and associative ring. Moreover, ring homomorphisms will always be assumed to preserve unit elements.  However, for our purposes, it will be convenient to distinguish between unital and nonunital  subrings. We will employ the term \emph{subring} of a ring $R$ to refer to any subset of $R$ that forms a ring under the same two operations. If the unit elements of the subring and overring coincide, then we call the subring a \emph{unital subring}; otherwise, it is known as a \emph{nonunital subring}.
\vspace{5pt}

We will always write right module homomorphisms on the left of their arguments and left module homomorphisms on the right.
\vspace{5pt}

The set of \emph{natural numbers}, denoted $\mathbb N$,
 consists of the positive integers and $0$.
\vspace{5pt}

An element $a$ of a ring $R$ is called a  \emph{left zero divisor} if there is a nonzero element $b$ of $R$ such that $ab=0$. 
\vspace{5pt}

Let $R$ be a ring and $M$ a right $R$-module. A finite sequence of elements $a_1,\dots,a_n\in M$  is said to be \emph{right $R$-linearly independent} if, for any $r_1,\dots, r_n\in R$,
\[a_1r_1+\cdots+a_nr_n=0\ \ \Longrightarrow\ \ r_1=r_2=\cdots=r_n=0.\]
Moreover, left linear independence for left modules is defined analogously. 
\vspace{5pt}

Let $R$ be a ring and $n$ a positive integer. We employ $M_n(R)$ and $M_{\mathbb N}(R)$ to denote, respectively, the set of all $n\times n$ and $\mathbb N\times \mathbb N$ matrices over $R$.
Moreover, we write  $UM_{\mathbb N}(R)$ for the ring of \emph{upper triangular} $\mathbb N\times \mathbb N$ matrices over $R$, that is, all matrices $M\in M_{\mathbb N}(R)$ for which $M(i,j)=0$ if $i>j$.
\vspace{5pt}
}
\end{notation}

\begin{acknowledgements} {\rm The authors are indebted to the referee for their careful reading of the manuscript and their astute suggestions, which led to significant improvements to the paper. In addition, we wish to acknowledge the assistance provided by {\it ChatGPT} \cite{ai} in proofreading an earlier version of the proof of Proposition 1.19 and detecting a typo in one of the sums.}
\end{acknowledgements}

\section{Proofs of the results}

We begin by recalling two common alternative characterizations of the rank condition and strong rank conditions, respectively. These amount to merely slight reformulations of the definitions. 

\begin{lemma} For any ring $R$, the following two statements are equivalent.
\begin{enumerate*}
\item $R$ satisfies the rank condition.
\item For every pair of positive integers $m, n$, there exists a right or left $R$-module epimorphism $R^n\to R^m$ only if $n\geq m$. 
\end{enumerate*}
\end{lemma}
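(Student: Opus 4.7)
The plan is to prove the two implications separately. The direction (ii)$\Rightarrow$(i) is immediate: specializing (ii) to the pair $(n, m) = (n, n+1)$ shows that no epimorphism $R^n \to R^{n+1}$ can exist, since $n \geq n+1$ fails. This gives the rank condition for both sides simultaneously; by the equivalence of the right and left formulations already recalled in the introduction, it suffices to obtain it for one side.

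For the converse (i)$\Rightarrow$(ii), I would argue by contraposition. Suppose there were a right $R$-module epimorphism $\varphi : R^n \to R^m$ with $n < m$. Then $m \geq n+1$, and one can compose $\varphi$ with the canonical projection $\pi : R^m \to R^{n+1}$ onto the first $n+1$ coordinates. Since $\pi$ is surjective and $\varphi$ is surjective, the composite $\pi \circ \varphi : R^n \to R^{n+1}$ is a right $R$-module epimorphism, contradicting (i). The same reasoning applies verbatim to left modules, using the left-sided formulation of the rank condition.

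The main, indeed only, point to notice is that the rank condition as stated ``no epimorphism $R^n \to R^{n+1}$'' is already strong enough to rule out epimorphisms $R^n \to R^m$ for all $m > n$, because shrinking the codomain by projecting away coordinates preserves surjectivity. There is no real obstacle here; the lemma is a purely formal reformulation of the definition.
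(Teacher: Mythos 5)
Your proof is correct, and it fills in exactly the routine argument the paper has in mind: the paper states Lemma 1.1 without proof, remarking only that it is a ``slight reformulation of the definitions,'' and your composition-with-a-coordinate-projection step is the standard way to make that precise. Nothing further is needed.
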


\begin{lemma} For any ring $R$, the following two statements are equivalent.
\begin{enumerate*}
\item $R$ satisfies RSRC (respectively, LSRC).
\item For every pair of positive integers $m, n$, there exists a right (respectively, left) $R$-module monomorphism $R^n\to R^m$ only if $n\leq m$. 
\end{enumerate*}
\end{lemma}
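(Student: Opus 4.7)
The plan is to verify that Lemma 1.2 is essentially a routine unpacking of the definition of RSRC (respectively, LSRC), combined with the observation that the existence of free-module monomorphisms is monotone in the codomain rank and antitone in the domain rank. I will focus on the right version; the left version is entirely parallel.

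First I would argue the easy direction (ii)$\Rightarrow$(i): specializing (ii) to the pair $(m,n)=(n,n+1)$, the existence of a right $R$-module monomorphism $R^{n+1}\to R^{n}$ would force $n+1\le n$, which is absurd. Hence no such monomorphism exists for any $n\in\mathbb Z^+$, which is precisely the definition of RSRC.

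For the converse (i)$\Rightarrow$(ii), I would argue by contrapositive. Suppose (ii) fails, so that there exist positive integers $m,n$ with $n>m$ and a right $R$-module monomorphism $f\colon R^{n}\to R^{m}$. Since $n\ge m+1$, the map $\iota\colon R^{m+1}\to R^{n}$ that sends $(r_1,\dots,r_{m+1})$ to $(r_1,\dots,r_{m+1},0,\dots,0)$ is a split monomorphism of right $R$-modules. The composition $f\circ\iota\colon R^{m+1}\to R^{m}$ is then a right $R$-module monomorphism, contradicting RSRC at the index $m$. Thus (i) implies (ii).

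The left version is obtained by the same argument applied to left modules, with all embeddings and compositions read on the appropriate side. There is no real obstacle here; the only thing to be mildly careful about is that the standard embedding $R^{m+1}\hookrightarrow R^{n}$ used above is genuinely a module monomorphism on both sides, so composing with $f$ preserves injectivity. Once this is observed, the lemma follows immediately.
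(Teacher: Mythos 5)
Your argument is correct: the reduction of (ii) to the defining case $(m,n)=(n,n+1)$ and, conversely, the composition of a given monomorphism $R^{n}\to R^{m}$ with the coordinate embedding $R^{m+1}\hookrightarrow R^{n}$ is exactly the routine verification the paper has in mind when it omits the proof, describing the lemma as a ``slight reformulation of the definitions.'' Nothing is missing, and the left-module case is indeed symmetric.
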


The literature contains a plethora of examples of rings that satisfy or fail to satisfy one or both of the strong rank conditions; see, in particular, \cite[\S 1D]{Lam}, \cite{KropLor}, and \cite{LO2}. One very straightforward, elementary family of examples that has, to our knowledge, never been mentioned in print before consists of all rings $UM_{\mathbb N}(R)$, where $R$ is an arbitrary ring satisfying RSRC. In Lemma 1.3, we show that such a ring fulfills RSRC but not LSRC. Part (ii) of the lemma will play a role below in Example 1.14. 

\begin{lemma} The following two statements hold for an arbitrary ring $R$.
\begin{enumerate*}
\item  If $R$ satisfies RSRC, then so does $UM_{\mathbb N}(R)$. 
\item The ring $UM_{\mathbb N}(R)$ fails to satisfy LSRC. 
\end{enumerate*}
\end{lemma}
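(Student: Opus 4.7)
For part (i), my plan is a truncation argument. Set $T = UM_{\mathbb N}(R)$ and, for each positive integer $N$, consider the map $\pi_N : T \to U_N(R)$ (where $U_N(R)$ denotes the ring of $N \times N$ upper triangular matrices over $R$) that keeps the top-left $N \times N$ corner and discards the rest. A direct calculation, using upper triangularity so that the relevant convolution sums $(MN)(i,k) = \sum_{i\le j \le k} M(i,j)N(j,k)$ only involve indices $\le k \le N$ when $k \le N$, shows $\pi_N$ is a ring homomorphism.

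Assume for contradiction that there is a right $T$-module monomorphism $\phi : T^{n+1} \to T^n$, realized as left multiplication by an $n \times (n+1)$ matrix $A = (A_{ij})$ over $T$. Apply $\pi_N$ entrywise to $A$ to obtain $A^{[N]}$ over $U_N(R)$, and let $\phi^{[N]} : U_N(R)^{n+1} \to U_N(R)^n$ be left multiplication by $A^{[N]}$, a right $U_N(R)$-module homomorphism. I would verify that $\phi^{[N]}$ is injective as follows: given $(u_1,\dots,u_{n+1})$ in its kernel, extend each $u_j$ by zero to $\tilde u_j \in T$; then the computation of $\sum_j A_{ij} \tilde u_j$ reduces, entry by entry, to $\sum_j \pi_N(A_{ij}) u_j = 0$ in the top-left block, while outside that block upper triangularity forces the product to vanish. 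Injectivity of $\phi$ then gives $u_j = 0$.

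The finish uses that $U_N(R)$ is a free right $R$-module of rank $\tfrac{N(N+1)}{2}$ via the diagonal embedding $R \hookrightarrow U_N(R)$, so $\phi^{[N]}$ becomes a right $R$-module monomorphism $R^{(n+1)N(N+1)/2} \to R^{nN(N+1)/2}$. By Lemma 1.2 this contradicts RSRC for $R$ whenever $N \ge 1$, so no such $\phi$ exists.

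For part (ii) the strategy is to exhibit a left $T$-module monomorphism $T^2 \to T$ outright. Define $a, b \in T$ by $a(i, 2i-1) = 1_R$, $b(i, 2i) = 1_R$ for all $i \ge 1$, and all other entries zero; both matrices are upper triangular since $2i - 1 \ge i$ and $2i \ge i$. The map $\psi : T^2 \to T$, $(x,y) \mapsto xa + yb$, is a left $T$-module homomorphism because right multiplication commutes with left multiplication. A short computation shows $(xa)(i,k)$ equals $x(i, (k+1)/2)$ when $k$ is odd and vanishes when $k$ is even, while $(yb)(i,k)$ equals $y(i, k/2)$ when $k$ is even and vanishes when $k$ is odd. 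Hence $xa + yb = 0$ forces all entries of $x$ and $y$ to vanish, so $\psi$ is injective and LSRC fails.

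The only real subtlety I anticipate is the injectivity verification in part (i): one has to track what happens to $A_{ij}\tilde u_j$ outside the top-left $N \times N$ block, and the argument only closes because the $A_{ij}$ themselves are upper triangular, so their products with a matrix supported in that block remain supported there. Part (ii) is mechanical once the "interleave the columns" choice of $a$ and $b$ is made.
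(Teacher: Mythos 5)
Your part (i) is correct but takes a different route from the paper. You truncate the matrix of $\phi$ to the top-left $N\times N$ corner via the ring homomorphism $\pi_N$ and then lift kernel elements back by extending with zeros; the key point you correctly isolate is that for an upper triangular multiplier $A_{ij}$ and a matrix $\tilde u_j$ supported in the corner block, the product $A_{ij}\tilde u_j$ is again supported in that block and depends only on $\pi_N(A_{ij})$. The paper instead restricts $\phi$ to $U^{n+1}$, where $U$ is the left ideal of matrices supported at the $(0,0)$ entry: since $U\cong R$ as a right $R$-module and $\phi(U^{n+1})\subseteq U^n$, one obtains a right $R$-module monomorphism $R^{n+1}\to R^n$ directly, with no need to check that truncation is a ring homomorphism or to perform any lifting. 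Your argument with $N=1$ is essentially the corner-quotient version of the same idea; both work, and yours has the mild bonus of producing monomorphisms $R^{(n+1)N(N+1)/2}\to R^{nN(N+1)/2}$ for every $N$, which is why you need the general $(m,n)$ form of Lemma 1.2 (correctly cited).

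Part (ii) is the same construction as the paper's --- two elements whose nonzero columns interleave --- but your specific matrices have an off-by-one under the paper's convention that $0\in\mathbb N$. Because $a(i,2i-1)$ and $b(i,2i)$ are only set to $1$ for $i\ge 1$, row $0$ and column $0$ of both $a$ and $b$ vanish, and consequently column $0$ of $x$ and of $y$ is never read: your own formulas give $(xa)(i,k)=x\bigl(i,(k+1)/2\bigr)$ only for odd $k\ge 1$, recovering $x(i,j)$ for $j\ge 1$ but not $x(i,0)$, and likewise for $yb$. Hence $(E_{00},0)$ lies in $\ker\psi$ and the map as written is not injective. The repair is immediate: take $a(i,2i)=1$ and $b(i,2i+1)=1$ for all $i\in\mathbb N$ (these are exactly the paper's $A$ and $B$), after which your direct entrywise computation goes through verbatim; the paper verifies left linear independence slightly differently, via the identities $AA^t=BB^t=I$ and $AB^t=BA^t=0$.
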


\begin{proof} Throughout the proof, we will write $I$ for the identity element of $UM_{\mathbb N}(R)$. 

To prove (ii), let $A$ be the matrix in $UM_{\mathbb N}(R)$ such that, for every $i\in \mathbb N$,  $A(i,2i)=1$ and $A(i,j)=0$ if $j\neq 2i$. Also, let $B$ be the upper triangular matrix such that 
$B(i,2i+1)=1$ and $B(i,j)=0$ if $j\neq 2i+1$. Notice that $AA^t=BB^t=I$ and $AB^t=BA^t=0$. It follows, then, easily from these equations that $A$ and $B$ are left $UM_{\mathbb N}(R)$-linearly independent.
Therefore the map $(X,Y)\mapsto XA+YB$ from $\left (UM_{\mathbb N}(R)\right )^2$ to $UM_{\mathbb N}(R)$ is a left $UM_{\mathbb N}(R)$-module monomorphism. Thus $UM_{\mathbb N}(R)$ fails to satisfy LSRC. 

To establish (i), we suppose that $R$ satisfies RSRC. Let $n$ be a positive integer and $\phi:\left (UM_{\mathbb N}(R)\right )^{n+1}\to \left (UM_{\mathbb N}(R)\right )^n$ be a right $UM_{\mathbb N}(R)$-module homomorphism. We will show that $\phi$ must not be injective, which will imply that $UM_{\mathbb N}(R)$ satisfies RSRC. 
Write $U$ for the subset of  $UM_{\mathbb N}(R)$ consisting of all the matrices with every entry other than the $(0,0)$ one equal to zero. Note that we can view $U$ as a right $R$-module via the ring homomorphism $r\mapsto rI$ from $R$ to $UM_{\mathbb N}(R)$. From the latter point of view, we have $U\cong R$. 

Since $U$ is a left ideal in $UM_{\mathbb N}(R)$, we know that $\phi(U^{n+1})\subseteq U^n$. Hence $\phi$ induces a right $R$-module homomorphism $U^{n+1}\to U^n$. Since $R$ satisfies RSRC and $U\cong R$ as a right $R$-module, this means that $\phi$ is not monic.  
\end{proof}

\begin{remark}{\rm As the reader can easily verify, there is a result corresponding to Lemma 1.3 that refers to lower triangular $\mathbb N\times\mathbb N$ matrices, but with the properties RSRC and LSRC interchanged.}
\end{remark}

We now recall some elementary properties of Ore extensions that we require for our proofs of Theorems 0.1 and 0.2. First, we point out that an Ore extension $R[x;\sigma, \delta]$ is plainly freely generated as a left $R$-module by the set $\{1, x, x^2,\dots\}$. Below, we establish the well-known fact that, if $\sigma$ is an automorphism, then the same set also generates $R[x;\sigma,\delta]$ freely as a right $R$-module. 

\begin{lemma} Let $R$ be a ring, $\sigma:R\to R$ a ring automorphism, and $\delta:R\to R$ a $\sigma$-derivation. For each $i\in \mathbb N$, let $U_i$ be the left $R$-submodule of $R[x;\sigma,\delta]$ generated by the set $I_i:=\{1, x,\dots, x^i\}$. Then each $U_i$ is free as a right $R$-module on the set $I_i$. 
\end{lemma}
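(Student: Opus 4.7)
My plan is to decompose the argument into three parts, all built on a single computational identity.

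First, I would establish by induction on $j$ that for every $r \in R$ and every $j \in \mathbb{N}$,
\[
x^j r \;=\; \sigma^j(r)\, x^j \;+\; \sum_{k=0}^{j-1} c_k\, x^k
\]
for suitable $c_k \in R$ (depending on $r$ and $j$). The base case is just the defining identity $xr = \sigma(r)x + \delta(r)$, and the inductive step follows by multiplying on the left by $x$ and applying the identity once more to each term. In particular, $x^j r \in U_j$ whenever $j \leq i$, so $U_i$ is closed under right multiplication by $R$ and inherits a right $R$-module structure from $R[x;\sigma,\delta]$.

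Second, to show $I_i$ generates $U_i$ as a right $R$-module, I would argue by induction on $i$. Given an arbitrary $u = \sum_{j=0}^i r_j x^j \in U_i$, use the surjectivity of $\sigma$ to choose $s_i \in R$ with $\sigma^i(s_i) = r_i$. The leading-term formula from step one then implies that $u - x^i s_i$ lies in $U_{i-1}$, and by induction this difference is in the right $R$-span of $I_{i-1} \subseteq I_i$. (The base case $i=0$ is trivial because $U_0 = R$.)

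Third, for right $R$-linear independence of $I_i$, suppose $\sum_{j=0}^{i} x^j s_j = 0$ for some $s_j \in R$. Expanding each $x^j s_j$ using the leading-term formula and reading off the coefficient of $x^i$ in the unique left $R$-module decomposition with respect to $\{1, x, \ldots, x^i\}$, one gets $\sigma^i(s_i) = 0$; injectivity of $\sigma$ yields $s_i = 0$. Iterating this on the shorter relation $\sum_{j=0}^{i-1} x^j s_j = 0$ forces $s_j = 0$ for every $j$.

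The only genuinely delicate step is step one; the induction must be set up carefully enough that the leading coefficient is visibly $\sigma^j(r)$, since both the surjectivity argument in step two and the injectivity argument in step three hinge on this. Once this leading-term formula is in hand, the remainder is routine and transparently exhibits where the two halves of the automorphism hypothesis are consumed.
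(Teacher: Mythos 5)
Your proof is correct and takes essentially the same approach as the paper: both arguments rest on the leading-term identity $x^i r \in \sigma^i(r)x^i + U_{i-1}$ and an induction on $i$, with the paper merely compressing your generation and independence steps into the single assertion that $U_i/U_{i-1}$ is freely generated as a right $R$-module by the image of $x^i$ (via the bijectivity of $\sigma^i$). No gaps; your version just makes explicit where surjectivity and injectivity of $\sigma$ are each consumed.
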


\begin{proof} The proof is by induction on $i$, the case $i=0$ being trivial. 
Suppose that $i>0$. Notice that 

\begin{equation} x^i\, r\in \sigma^i(r)x^i + U_{i-1}.\end{equation}
Appealing to (1.1) and invoking the bijectivity of $\sigma^i$, we deduce that $U_i/U_{i-1}$ is freely generated as a right $R$-module by the image of $x^i$. Since $U_{i-1}$ is freely generated as a right $R$-module by $I_{i-1}$, this means that $I_i$ generates $U_i$ as a free right $R$-module. The  conclusion of the lemma now follows.
\end{proof}

\begin{corollary} Let $R$ be a ring, $\sigma:R\to R$ a ring automorphism, and $\delta:R\to R$ a $\sigma$-derivation. Then the ring $R[x;\sigma,\delta]$ is freely generated as a right $R$-module by the set $\{1,x,x^2,\dots\}$.
\end{corollary}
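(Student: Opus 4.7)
The strategy is to obtain the corollary by passing to the union of the ascending chain $U_0 \subseteq U_1 \subseteq U_2 \subseteq \cdots$ of right $R$-submodules introduced in Lemma~1.5. The starting observation is that, since $R[x;\sigma,\delta]$ is freely generated as a left $R$-module by $\{1,x,x^2,\dots\}$, every element of $R[x;\sigma,\delta]$ is a left $R$-linear combination of finitely many powers of $x$ and therefore lies in $U_i$ for all sufficiently large $i$. This yields the set-theoretic equality $R[x;\sigma,\delta]=\bigcup_{i\in\mathbb{N}} U_i$.

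With this equality in hand, I would verify the spanning and right $R$-linear independence conditions separately, invoking Lemma~1.5 in each case. For spanning, any element $p \in R[x;\sigma,\delta]$ lies in some $U_i$ and is therefore, by Lemma~1.5, a right $R$-linear combination of the elements of $I_i \subseteq \{1,x,x^2,\dots\}$. For linear independence, any finite right $R$-linear relation $\sum_{j=0}^n x^j b_j = 0$ with the $b_j \in R$ takes place entirely within $U_n$; since Lemma~1.5 furnishes $\{1,x,\dots,x^n\}$ as a free right $R$-basis of $U_n$, each $b_j$ must vanish.

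I do not anticipate any real obstacle here: the corollary is essentially a colimit of the freeness statements already proved in Lemma~1.5, and the only point to check is that every finite sum or relation in $R[x;\sigma,\delta]$ involves only boundedly many powers of $x$ and hence is contained in a single $U_i$.
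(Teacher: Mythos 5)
Your argument is correct and is precisely the intended one: the paper states the corollary without proof as an immediate consequence of Lemma~1.5, relying on exactly the observation you make, namely that $R[x;\sigma,\delta]=\bigcup_i U_i$ with compatible free right $R$-bases $I_i\subseteq I_{i+1}$, so that spanning and independence both reduce to a single $U_i$.
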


In investigating Ore extensions, we will avail ourselves of the concept of a filtration of a ring. 

\begin{definition}{\rm Let $R$ be a ring and $\mathcal{U}=\{U_i:i\in \mathbb N\}$ a set of additive subgroups of $R$. The set $\mathcal{U}$ is a {\it filtration} of $R$ if the following four conditions are satisfied: 
\begin{enumerate*}
\item $U_0\subseteq U_1\subseteq U_2\subseteq\cdots;$
\item $U_iU_j\subseteq U_{i+j}$ for all $i, j\in \mathbb N$;
\item $\bigcup_{i=0}^\infty U_i=R$; 
\item $1\in U_0$.
\end{enumerate*}
If these conditions hold, then we also say that $R$ is {\it filtered} by $\mathcal{U}$.}
\end{definition}

Filtrations of rings are readily seen to enjoy the following properties. 

\begin{lemma} If a ring $R$ is filtered by a set $\{U_i : i\in \mathbb N\}$ of additive subgroups, then $U_0$ is a unital subring of $R$ and, for every $i\in \mathbb N$, $U_i$ is both a right and left $U_0$-submodule.
\end{lemma}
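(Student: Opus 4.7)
The plan is to verify both claims by direct application of the four filtration axioms (i)--(iv); no further ingredients are needed.

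First I would address the assertion about $U_0$. Since $U_0$ is by hypothesis an additive subgroup of $R$, it only remains to check closure under multiplication and the presence of the unit. Closure under multiplication is immediate from axiom (ii) applied with $i=j=0$, which gives $U_0 U_0 \subseteq U_0$. The unit lies in $U_0$ by axiom (iv). Associativity and distributivity are inherited from $R$, so $U_0$ is a ring, and because its unit element is $1_R$, it is a unital subring.

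Next I would turn to the claim that each $U_i$ is both a left and right $U_0$-submodule. Fix $i \in \mathbb{N}$. Again $U_i$ is already an additive subgroup, so the issue is closure under multiplication by elements of $U_0$ on either side. But axiom (ii), with the indices $(0,i)$ and $(i,0)$ respectively, yields
\begin{equation*}
U_0 U_i \subseteq U_{0+i} = U_i \quad \text{and} \quad U_i U_0 \subseteq U_{i+0} = U_i,
\end{equation*}
which is exactly what is required.

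There is essentially no obstacle here; the lemma is a direct unpacking of Definition 1.7. I expect the proof to fit in a few lines, with the only judgment call being how much detail to record about the routine verifications of the subring axioms for $U_0$.
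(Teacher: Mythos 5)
Your proof is correct and is exactly the direct verification the paper has in mind: the paper states this lemma without proof as ``readily seen,'' and your unpacking of axioms (ii) and (iv) is the intended argument.
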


It is easy to see that every Ore extension admits a canonical filtration. 

\begin{lemma} Let $R$ be a ring, $\sigma:R\to R$ a ring endomorphism, and $\delta:R\to R$ a $\sigma$-derivation. For each $i\in \mathbb N$, let $U_i$ be the left $R$-submodule of $R[x;\sigma,\delta]$ generated by the set $\{1, x,\dots, x^i\}$. Then $R[x;\sigma,\delta]$ is filtered by $\{U_0, U_1,\dots\}$.
\end{lemma}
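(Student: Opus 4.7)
The plan is to verify the four filtration axioms in turn. Conditions (i), (iii), and (iv) are essentially immediate from the definition of the $U_i$: the inclusions $U_i \subseteq U_{i+1}$ follow because the generating set for $U_i$ is contained in that for $U_{i+1}$; every element of $R[x;\sigma,\delta]$ is a polynomial in $x$ of some finite degree $n$, hence lies in $U_n$, giving (iii); and $1 \in U_0 = R$, giving (iv). So the only substantive condition is (ii), the multiplicative compatibility $U_i U_j \subseteq U_{i+j}$.

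To establish (ii), the key intermediate claim I would prove is that $x^k r \in U_k$ for every $k \in \mathbb{N}$ and every $r \in R$. I would prove this by induction on $k$. The base case $k=0$ is trivial, and for the inductive step one writes
\[
x^k r \;=\; x \cdot (x^{k-1} r),
\]
expands $x^{k-1} r = \sum_{m=0}^{k-1} s_m x^m$ using the inductive hypothesis, and then applies the Ore relation $x s_m = \sigma(s_m) x + \delta(s_m)$ term by term. The resulting sum has the form $\sum_{m=0}^{k-1} \sigma(s_m) x^{m+1} + \sum_{m=0}^{k-1} \delta(s_m) x^m$, all of whose monomials have degree at most $k$, so it lies in $U_k$.

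With this claim in hand, (ii) follows quickly. A typical generator of $U_i$ has the form $r x^a$ with $a \leq i$, and a typical generator of $U_j$ has the form $s x^b$ with $b \leq j$. Their product is $r x^a s x^b = r (x^a s) x^b$, and by the claim $x^a s \in U_a$, say $x^a s = \sum_{m=0}^{a} t_m x^m$. Hence $r x^a s x^b = \sum_{m=0}^{a} r t_m x^{m+b} \in U_{a+b} \subseteq U_{i+j}$. Extending bilinearly from generators yields $U_i U_j \subseteq U_{i+j}$, completing the verification.

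The main obstacle is really just the inductive lemma $x^k r \in U_k$; everything else is bookkeeping. This step is minor, since it reduces directly to a single application of the Ore commutation rule per induction step, and no assumption on $\sigma$ (injectivity, surjectivity) is needed, which is reassuring given that Lemma 1.9 is stated for an arbitrary endomorphism $\sigma$.
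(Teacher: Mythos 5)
Your proof is correct; the paper states this lemma without proof (as ``easy to see''), and your verification is exactly the standard argument one would supply, with the only substantive point being the inductive claim $x^k r \in U_k$ obtained from the Ore relation $xr = \sigma(r)x + \delta(r)$. Nothing is missing, and you are right that no hypothesis on $\sigma$ is needed.
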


The first main result of the paper, Theorem 0.1, is a special case of the following proposition about filtered rings.  

\begin{proposition}  Let $R$ be a ring filtered by a set $\{U_i : i\in \mathbb N\}$ of additive subgroups. Suppose further that, for all $i\in \mathbb N$, $U_{i+1}/U_i$ is a free left $U_0$-module of rank $1$.
Then $R$ satisfies the rank condition if and only if $U_0$ satisfies the rank condition.
\end{proposition}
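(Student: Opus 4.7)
I plan to prove the two implications separately. The \emph{only if} direction is routine and I would dispatch it first: since $U_0$ is a unital subring of $R$ by Lemma 1.6, any matrix identity $NM = I_{n+1}$ in $U_0$ witnessing a left $U_0$-module epimorphism $U_0^n \to U_0^{n+1}$ remains valid over $R$ and therefore produces a left $R$-module epimorphism $R^n \to R^{n+1}$; by contraposition, the rank condition on $R$ forces the rank condition on $U_0$.

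For the \emph{if} direction I would argue by contrapositive: assuming $R$ fails the rank condition, the plan is to construct a left $U_0$-module epimorphism between finitely generated free left $U_0$-modules whose target rank strictly exceeds its source rank, contradicting the rank condition on $U_0$ via Lemma 1.1. The failure on $R$ supplies a left $R$-module epimorphism $\phi: R^n \to R^{n+1}$, realized as right multiplication by some $M \in M_{n \times (n+1)}(R)$, together with $N \in M_{(n+1) \times n}(R)$ satisfying $NM = I_{n+1}$ (assembled from preimages of the standard basis vectors). Choose $d, L \in \mathbb{N}$ so that all entries of $M$ lie in $U_d$ and all entries of $N$ lie in $U_L$.

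The heart of the argument is to restrict $\phi$ to filtration pieces. For each $m \in \mathbb{N}$, the filtration axioms immediately give $\phi(U_{m+L}^n) \subseteq U_{m+L+d}^{n+1}$, while the identity $NM = I_{n+1}$ supplies the reverse containment $\phi(U_{m+L}^n) \supseteq U_m^{n+1}$ (because any $s = (s_1, \ldots, s_{n+1}) \in U_m^{n+1}$ has the preimage $\sum_k s_k N_k \in U_{m+L}^n$, with $N_k$ denoting the $k$-th row of $N$). Lifting generators of each quotient $U_i/U_{i-1}$ to elements $e_i \in U_i$ (taking $e_0 = 1$) exhibits $U_k$ as a free left $U_0$-module of rank $k+1$ with basis $\{1, e_1, \ldots, e_k\}$, and furnishes a left $U_0$-module splitting $U_{m+L+d} = U_m \oplus \bigl(\bigoplus_{i=m+1}^{m+L+d} U_0 e_i\bigr)$, hence a projection $\pi: U_{m+L+d}^{n+1} \to U_m^{n+1}$. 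The composition $\pi \circ \phi: U_{m+L}^n \to U_m^{n+1}$ is then a left $U_0$-module epimorphism between free modules of ranks $n(m+L+1)$ and $(n+1)(m+1)$, respectively.

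Choosing any $m \geq nL$ forces $(n+1)(m+1) - n(m+L+1) = m + 1 - nL \geq 1$, so the target rank strictly exceeds the source rank, delivering the desired contradiction. The main obstacle to negotiate is the absence of a ring retraction $R \to U_0$ in general --- for an Ore extension $U_0[x;\sigma,\delta]$ with $\delta \neq 0$, the evaluation $x \mapsto 0$ fails to be multiplicative --- so one cannot simply transport a failure of the rank condition downward along a quotient map. The filtration-and-projection device above circumvents this by comparing source and target ranks on finite-dimensional truncations, where the linear growth in $m$ of the target rank eventually outpaces that of the source.
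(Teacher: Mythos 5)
Your proposal is correct and follows essentially the same strategy as the paper: contrapositive of the ``if'' direction, restriction of the epimorphism to filtration pieces, a $U_0$-linear projection onto a lower filtration level made possible by the splitting coming from the freeness of each $U_{i+1}/U_i$, and a rank count that wins because the target rank grows like $(n+1)(m+1)$ while the source rank grows like $n(m+L+1)$. The only cosmetic differences are that you specialize to an epimorphism $R^n\to R^{n+1}$ where the paper works with a general $R^n\to R^m$, $m>n$, and that you project from $U_{m+L+d}^{n+1}$ rather than from all of $R^{n+1}$; your threshold $m\ge nL$ matches the paper's choice of $l>\frac{nk}{m-n}-1$.
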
 

\begin{proof} The ``only if" part follows from the fact that the rank condition is inherited by unital subrings. To prove the ``if" statement, we establish its contrapositive. Suppose that $R$ fails to satisfy the rank condition; that is, there is a left $R$-module epimorphism $\phi:R^n\to R^m$ such that $m>n$.  
 For each $i=1,\dots, m$, define ${\bf e}_i$ to be the element of $R^m$ that has $1$ in the $i$th position and $0$ everywhere else. Let ${\bf a}_1,\dots, {\bf a}_m\in R^n$ such that ${\bf a}_i\phi={\bf e}_i$ for $i=1,\dots, m$. Take $k$ to be a natural number such that ${\bf a}_i\in U^n_k$ for $i=1,\dots,m$. Select $l$ to be a natural number larger than $\frac{nk}{m-n}-1$.  
Recognizing that $U_l$ is a left $U_0$-module direct summand in $R$, we let $\pi^m_l:R^m\to U_l^m$ be the projection $U_0$-module epimorphism. 

We claim that $U^m_l\subseteq (U^n_{k+l})\phi$. To see this, let $r\in U_l$ and notice that
\[r{\bf e}_i=r({\bf a}_i\phi)=(r{\bf a}_i)\phi\in (U^n_{k+l})\phi\]
for every $i=1,\dots, m$. Since the set $\{r{\bf e}_i:r\in U_l,\ i=1,\dots,m\}$ generates $U^m_l$ as an additive abelian group, this observation verifies the claim. 

Now let $\phi_{k+l}:U^n_{k+l}\to R^m$ be the restriction of $\phi$ to $U^n_{k+l}$. In view of the above claim, the composition $\phi_{k+l}\pi^m_l:U^n_{k+l}\to U^m_l$ must be a left $U_0$-module epimorphism. This gives rise to a left $U_0$-module epimorphism $U_0^{n(k+l+1)}\to U_0^{m(l+1)}$. However, our choice of $l$ renders $n(k+l+1)<m(l+1)$, which implies that $U_0$ fails to fulfill the rank condition. 
\end{proof}

With the aid of Lemma 1.4,  our second main result, Theorem 0.2, can be deduced immediately from our next proposition. 

\begin{proposition}  Let $R$ be a ring filtered by a set $\{U_i:i\in \mathbb N\}$ of additive subgroups. Suppose further that, for all $i\in \mathbb N$, $U_i$ is a free left (respectively, right) $U_0$-module of rank $i+1$.
Then the following two statements hold.
\begin{enumerate*}
\item If $U_0$ satisfies LSRC (respectively, RSRC), then $R$ satisfies LSRC (respectively, RSRC).
\item If $R$ satisfies RSRC (respectively, LSRC) then $U_0$ satisfies RSRC (respectively, LSRC). 
\end{enumerate*}
\end{proposition}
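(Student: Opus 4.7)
The plan is to prove each assertion in the left-module case (i.e., assuming each $U_i$ is free as a \emph{left} $U_0$-module); the right-module case will follow by the symmetric argument.

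For (i), I would mimic the epimorphism argument in Proposition~1.8, but adapted to monomorphisms. Assume $U_0$ satisfies LSRC, and let $\phi:R^{n+1}\to R^n$ be a left $R$-module homomorphism. Writing $\mathbf{e}_1,\ldots,\mathbf{e}_{n+1}$ for the standard basis of $R^{n+1}$ and setting $\mathbf{b}_i:=\mathbf{e}_i\phi$, pick $k$ so that every $\mathbf{b}_i\in U_k^n$. For any $\mathbf{v}\in U_l^{n+1}$ we have $\mathbf{v}\phi\in U_{l+k}^n$, since $U_lU_k\subseteq U_{l+k}$. Hence $\phi$ restricts to a left $U_0$-module homomorphism between the free left $U_0$-modules $U_l^{n+1}$ and $U_{l+k}^n$, of ranks $(l+1)(n+1)$ and $(l+k+1)n$ respectively. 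If $\phi$ were a monomorphism, so would be this restriction, and Lemma~1.2 applied to $U_0$ would force $(l+1)(n+1)\leq (l+k+1)n$, i.e., $l+1\leq kn$. Choosing $l\geq kn$ then yields the required contradiction.

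For (ii), I would prove the contrapositive: given a right $U_0$-module monomorphism $\psi:U_0^{n+1}\to U_0^n$, I extend it to a right $R$-module monomorphism $\tilde\psi:R^{n+1}\to R^n$. Represent $\psi$ by left multiplication by an $n\times(n+1)$ matrix $A$ with entries in $U_0$; since $U_0\subseteq R$, the same matrix defines a right $R$-module homomorphism $\tilde\psi$ by $\mathbf{v}\mapsto A\mathbf{v}$. To verify injectivity of $\tilde\psi$, take $\mathbf{v}=(r_1,\ldots,r_{n+1})^T\in R^{n+1}$ with $A\mathbf{v}=0$, pick $k$ so that every $r_i\in U_k$, and choose a left $U_0$-basis $\{v_1,\ldots,v_{k+1}\}$ of $U_k$. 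Writing $r_i=\sum_j s_{ij}v_j$ with $s_{ij}\in U_0$ and bundling these coefficients into column vectors $\mathbf{s}_j\in U_0^{n+1}$, one computes $A\mathbf{v}=\sum_j(A\mathbf{s}_j)v_j$. Since $\{v_j\}$ is left $U_0$-linearly independent and each entry of $A\mathbf{s}_j$ lies in $U_0$, vanishing of $A\mathbf{v}$ forces $A\mathbf{s}_j=0$ for every $j$. Injectivity of $\psi$ then gives $\mathbf{s}_j=0$ for all $j$, whence $\mathbf{v}=0$.

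The main obstacle is precisely the injectivity step in (ii): one has to leverage the freeness of a single sufficiently large $U_k$ as a left $U_0$-module to convert a single vanishing equation in $R^n$ into finitely many vanishing equations in $U_0^n$ to which the hypothesis on $\psi$ applies. The crucial point that makes this possible is that $A$ has entries in $U_0$, so left multiplication by $A$ commutes with expansion in a left $U_0$-basis of $U_k$; the $U_0$-linear independence of the basis then lets one read off the coefficients componentwise and close the argument.
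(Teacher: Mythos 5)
Your proof is correct. Part (i) is essentially the paper's own argument for Proposition~1.9(i): the paper phrases it as a contrapositive for a general left $R$-module monomorphism $R^m\to R^n$ with $m>n$ and chooses $l>\frac{nk}{m-n}-1$, which for $m=n+1$ is exactly your $l\geq kn$; the restriction $U_l^m\to U_{k+l}^n$ and the rank comparison $m(l+1)>n(k+l+1)$ versus $(l+1)(n+1)\leq(l+k+1)n$ are the same computation. Part (ii) is where you genuinely diverge: the paper disposes of it in one line by asserting that $R$ is free as a left $U_0$-module and citing \cite[Proposition 2.14]{LO2}, which says that RSRC descends to a unital subring over which the ring is left free. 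Your extension-of-scalars argument --- extending the matrix $A$ to a right $R$-module map and using a left $U_0$-basis of a single $U_k$ to split the equation $A\mathbf{v}=0$ into the finitely many equations $A\mathbf{s}_j=0$ in $U_0^n$ --- is in substance the proof of that cited fact, so the mathematical content agrees; what your version buys is self-containedness, plus a small technical advantage: you only use freeness of each individual $U_k$, whereas the paper's claim that the union $R=\bigcup_i U_i$ is itself free as a left $U_0$-module does not follow instantly from the stated hypotheses (it does hold in the intended application to Ore extensions, where the bases $\{1,x,\dots,x^i\}$ are nested).
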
 

\begin{proof}  We will just prove the result for the $U_i$ being free left modules; a dual argument can be used for the other version. First we dispose of statement (ii).  Assume that $R$ satisfies RSRC.  Then $R$ is free as a left $U_0$-module, which means that $U_0$ must satisfy RSRC by \cite[Proposition 2.14]{LO2}.  

To prove (i), we establish the contrapositive. Suppose that $R$ fails to satisfy LSRC. This means that there are $m,n\in \mathbb Z^+$ with $m>n$ and a left $R$-module monomorphism $\phi:R^m\to R^n$. Let $k$ be the largest natural number such that $U_k$ contains all the entries in the matrix of $\phi$ with respect to the standard bases for $R^m$ and $R^n$. Pick $l$ to be a natural number larger than $\frac{nk}{m-n}-1$. The map $\phi$ induces a left $U_0$-module monomorphism $U^m_l\to U^n_{k+l}$, yielding a left $U_0$-module monomorphism $U_0^{m(l+1)}\to U_0^{n(k+l+1)}$.
But our choice of $l$ ensures that $m(l+1)>n(k+l+1)$, which implies that $U_0$ fails to satisfy LSRC.
\end{proof}

The hypothesis that $\sigma$ is an automorphism cannot be dropped from (ii) in Theorem 0.2. This is shown by Lemma 1.10 below, which is established in the second paragraph of \S 10C in Lam's book \cite{Lam}. 

\begin{lemma}[{Lam}] Let $D$ be a division ring and $\sigma:D\to D$ a ring homomorphism. If $\sigma$ is not surjective, then $D[x;\sigma]$ does not satisfy RSRC. 
\end{lemma}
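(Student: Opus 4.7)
The plan is to construct an explicit right $R$-module monomorphism $R^2 \hookrightarrow R$, where $R = D[x;\sigma]$; this immediately violates RSRC (taking $n=1$ in the definition). The construction will exploit the assumption that $\sigma(D)$ is a proper subset of $D$.

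First, fix any element $a \in D \setminus \sigma(D)$. I also want to note, at the outset, that $\sigma$ is automatically injective: since $D$ is a division ring, the kernel of $\sigma$ is a two-sided ideal, hence either $0$ or $D$, and the latter is ruled out by $\sigma(1)=1$. Now define
\[\phi: R^2 \longrightarrow R, \qquad (r_1,r_2) \longmapsto x\cdot r_1 + ax\cdot r_2.\]
Because $\phi$ is given by left multiplication by elements of $R$, it is manifestly a right $R$-module homomorphism.

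The substance of the proof is to verify that $\phi$ is injective. Given $r_1, r_2 \in R$, write $r_1 = \sum_i b_i x^i$ and $r_2 = \sum_i c_i x^i$ with $b_i, c_i \in D$. Using the commutation rule $xd = \sigma(d)x$, one computes
\[\phi(r_1,r_2) \;=\; \sum_i \sigma(b_i)x^{i+1} + \sum_i a\sigma(c_i)x^{i+1} \;=\; \sum_i \bigl(\sigma(b_i)+a\sigma(c_i)\bigr)\,x^{i+1}.\]
If this vanishes, then $\sigma(b_i) = -a\sigma(c_i)$ for every $i$. Whenever $c_i \neq 0$, injectivity of $\sigma$ gives $\sigma(c_i) \neq 0$, so
\[a \;=\; -\sigma(b_i)\sigma(c_i)^{-1} \;=\; \sigma(-b_i c_i^{-1}) \;\in\; \sigma(D),\]
contradicting the choice of $a$. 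Hence every $c_i = 0$, and therefore $\sigma(b_i)=0$, which by injectivity of $\sigma$ forces $b_i = 0$ as well. Thus $r_1 = r_2 = 0$, and $\phi$ is a monomorphism.

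There is really no serious obstacle here; the whole point is to recognize that the elements $x$ and $ax$ of $R$ are right $R$-linearly independent precisely because $a \notin \sigma(D)$. The only cautionary step is the verification of injectivity, which must be done one degree at a time and depends on $\sigma$ being injective to conclude $b_i=0$ from $\sigma(b_i)=0$; both facts are free for a (nonzero) ring homomorphism out of a division ring. Once $\phi$ is shown to be a right $R$-module monomorphism $R^2 \to R$, Lemma 1.2 tells us that $D[x;\sigma]$ fails to satisfy RSRC.
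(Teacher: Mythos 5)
Your proof is correct, and it is essentially the argument the paper is pointing to: the paper does not prove this lemma itself but cites the second paragraph of \S 10C of Lam's book, where the same element $a\in D\setminus\sigma(D)$ is used to show that $x$ and $ax$ generate a free right ideal of rank $2$ (equivalently, $xR\cap axR=0$), so that $D[x;\sigma]$ is not right Ore and hence fails RSRC. Your degree-by-degree verification of the right linear independence of $x$ and $ax$, using the automatic injectivity of $\sigma$, is a complete and accurate write-up of that argument.
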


\begin{remark}{\rm For an example of a division ring $D$ and a nonsurjective ring endomorphism $\sigma:D\to D$, take $D$ to be the Laurent series ring $K[[x]][x^{-1}]$ over a field $K$, and define $\sigma:D\to D$ to be the $K$-algebra homomorphism $D\to D$ such that $\sigma(x)=x^2$.}\end{remark}

The map $\sigma$ in Lemma 1.10 must be injective. Hence the lemma shows that the hypothesis in Theorem 0.2(ii) cannot be weakened to $\sigma$ being injective. However, we do not know whether statement (ii) remains true if the hypothesis is weakened to $\sigma$ being surjective. This is question (1) on the list of Open Questions 1.20 at the end of the paper. The only contribution that we are able to make to answering this question is Corollary 0.3 from the introduction, which furnishes a positive result in the case that $R$ is a domain. To deduce Corollary 0.3, we require the following lemma and proposition. 

\begin{lemma} Let $R$ be a ring, $\sigma:R\to R$ a ring endomorphism, and $\delta:R\to R$ a $\sigma$-derivation. If $a\in {\rm Ker}\ \sigma$, then, within the Ore extension $R[x;\sigma, \delta]$, we have $x^ia^i\in R$ for all $i\in \mathbb N$. 
\end{lemma}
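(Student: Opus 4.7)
The plan is to prove a slight strengthening by induction on $i$: for all natural numbers $i, j$ with $j \geq i$, we have $x^i a^j \in R$. The lemma will then follow by specializing to $j = i$. The base case $i = 0$ is immediate, since $x^0 a^j = a^j \in R$.

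Before running the inductive step, I would first establish the auxiliary identity $x a^j = \delta(a)\, a^{j-1}$ for every $j \geq 1$. Because $\sigma$ is multiplicative and $\sigma(a) = 0$, one has $\sigma(a^j) = 0$ for all $j \geq 1$; a short induction on $j$ using the $\sigma$-derivation rule $\delta(rs) = \delta(r)s + \sigma(r)\delta(s)$ yields $\delta(a^j) = \delta(a)\, a^{j-1}$. Substituting into the defining relation $xa^j = \sigma(a^j)x + \delta(a^j)$ then gives the identity, and in particular shows $xa^j \in R$.

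With this in hand, the inductive step is direct. Assume the strengthened claim holds for some $i \geq 0$, and let $j \geq i+1$. Then
\[
x^{i+1} a^j \;=\; x^i (x a^j) \;=\; x^i\, \delta(a)\, a^{j-1}.
\]
Since $R[x;\sigma,\delta]$ is freely generated as a left $R$-module by $\{1, x, x^2, \dots\}$, and right-multiplication by a ring element cannot increase the $x$-degree, the element $x^i \delta(a)$ can be written as $\sum_{k=0}^{i} r_k x^k$ for some $r_k \in R$. Multiplying on the right by $a^{j-1}$ gives $x^{i+1} a^j = \sum_{k=0}^{i} r_k\, x^k a^{j-1}$. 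Because $j - 1 \geq i \geq k$ for every $k$ appearing in the sum, the inductive hypothesis places each $x^k a^{j-1}$ in $R$, so the whole sum lies in $R$.

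The key subtlety --- and the only real obstacle --- will be identifying the right strengthening of the hypothesis. A naive induction directly on $x^i a^i \in R$ does not close: after pushing a single $x$ past the $a$'s one is left with $x^i \delta(a) a^i$, and expanding the prefix $x^i \delta(a)$ as a left polynomial produces intermediate terms $x^k a^i$ for every $k \leq i$. Allowing one extra factor of $a$ in the hypothesis ($j \geq i$ rather than $j = i$) supplies precisely the slack needed to dispatch those intermediate terms, after which the argument is just bookkeeping with the canonical left-module structure of the Ore extension.
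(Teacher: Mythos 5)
Your argument is correct, but it takes a genuinely different (and more laborious) route than the paper's. The difference lies in how the product $x^i a^i$ is decomposed for the induction. You peel off the \emph{innermost} $x$, writing $x^{i+1}a^{j} = x^i(xa^{j}) = x^i\,\delta(a)\,a^{j-1}$; expanding the prefix $x^i\delta(a)$ as a left polynomial then scatters the problem across all lower degrees, which is exactly why you need the strengthened hypothesis $x^k a^j\in R$ for all $j\ge k$ and, strictly speaking, \emph{strong} induction on $i$ (your inductive step invokes the claim for every $k\le i$, not just for $i$ itself --- a harmless but worth-flagging bookkeeping point). The paper instead peels off the \emph{outermost} $x$ together with the rightmost $a$: writing $r:=x^{i-1}a^{i-1}\in R$ by the inductive hypothesis, one gets $x^i a^i = x(ra) = \sigma(ra)x+\delta(ra) = \delta(ra)\in R$, since $\sigma(ra)=\sigma(r)\sigma(a)=0$. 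That choice of bracketing makes the naive induction close in one line, with no strengthening, no auxiliary identity for $\delta(a^j)$, and no appeal to the left-module basis of the Ore extension. Your version does yield the slightly stronger conclusion $x^i a^j\in R$ for all $j\ge i$, but the paper only needs the diagonal case, and its decomposition gets there far more economically.
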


\begin{proof} We argue by induction on $i$, the case $i=0$ being trivial.  Assume $i>0$ and $r:=x^{i-1 }a^{i-1}\in R$. Then $x^i a^i=xra=\sigma(ra)x+\delta(ra)=\delta(ra)\in R$. 
\end{proof}

\begin{proposition} Let $R$ be a ring that satisfies RSRC. Furthermore, let $\sigma:R\to R$ be a ring endomorphism and $\delta:R\to R$ a $\sigma$-derivation. If ${\rm Ker}\ \sigma$ fails to consist entirely of left zero divisors, then $R[x; \sigma, \delta]$ must satisfy RSRC. 
\end{proposition}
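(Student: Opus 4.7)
The plan is to argue by contrapositive: assume that $R[x;\sigma,\delta]$ fails to satisfy RSRC and use an element of $\ker\sigma$ to manufacture a right $R$-module monomorphism $R^{n+1}\to R^n$ for some $n\geq 1$, contradicting RSRC for $R$. By Lemma~1.2, the failure of RSRC for $T:=R[x;\sigma,\delta]$ provides an integer $n\geq 1$ and elements $v_1,\dots,v_{n+1}\in T^n$ that are right $T$-linearly independent. Fix $a\in\ker\sigma$ that is not a left zero divisor in $R$, and choose $N\in\mathbb N$ large enough that every coordinate of every $v_i$ lies in the left $R$-submodule $U_N$ of $T$ generated by $\{1,x,\dots,x^N\}$.

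The central idea is that right multiplication by $a^N$ collapses $U_N$ into $R$. Indeed, for any $f=\sum_{j=0}^N r_j x^j\in U_N$, Lemma~1.12 gives
\[ fa^N=\sum_{j=0}^N r_j\,(x^j a^j)\,a^{N-j}\in R,\]
so applying this coordinatewise places each $v_i a^N$ inside $R^n$.

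I would then verify that $v_1 a^N,\dots,v_{n+1}a^N$ remain right $R$-linearly independent in $R^n$. If $\sum_i (v_i a^N) r_i=0$ with $r_i\in R$, then associativity in $T$ rewrites this as $\sum_i v_i (a^N r_i)=0$. Right $T$-linear independence of the $v_i$ forces $a^N r_i=0$ for each $i$, and since $a$ is not a left zero divisor neither is $a^N$, whence $r_i=0$ for each $i$. The map $(r_1,\dots,r_{n+1})\mapsto \sum_i (v_i a^N) r_i$ is therefore a right $R$-module monomorphism $R^{n+1}\to R^n$, yielding the desired contradiction.

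The single conceptual step that requires real thought is the identification of the right ``collapse operator'': the hypothesis $\sigma(a)=0$ is what forces $x^j a^j$ to land in $R$ and thus lets $a^N$ flatten the chosen polynomial witnesses into elements of $R^n$, while the regularity hypothesis on $a$ is exactly what is needed to transport linear independence back across this collapse. The rest of the argument is a routine transport-of-structure computation.
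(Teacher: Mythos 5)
Your argument is correct and is essentially the paper's own proof: both right-multiply the polynomial data by a suitable power of $a\in{\rm Ker}\ \sigma$, use Lemma~1.11 (which you cite as 1.12) to collapse everything into $R$, and use the fact that $a$ (hence $a^N$) is not a left zero divisor to transport injectivity back to a right $R$-module monomorphism $R^{n+1}\to R^n$. The only differences are cosmetic: you phrase the argument contrapositively via right linear independence of the columns, whereas the paper precomposes the given homomorphism with multiplication by $a^k$.
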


\begin{proof}  Assume that there is an element $a$ of ${\rm Ker}\ \sigma$ that is not a left zero divisor. Put $S:=R[x;\sigma,\delta]$, and let $\phi:S^{n+1}\to S^n$ be a right $S$-module homomorphism.  Invoking the filtration  $\{U_i : i\in \mathbb N\}$ of $R[x; \sigma, \delta]$ described in Lemma 1.7, take $k$ to be a natural number such that $U_k$ contains all of the entries of the matrix of $\phi$ with respect to the standard bases of $S^{n+1}$ and $S^n$. Define the right $R$-module homomorphism $\psi:R^{n+1}\to S^n$ by $\psi{\bf r}:=\phi( a^k{\bf r})$ for all ${\bf r}\in R^{n+1}$. By Lemma 1.11, we have $x^ia^k\in R$ for $0\leq i\leq k$. This implies that ${\rm Im}\ \psi$ is contained in $R^n$. Hence the fact that $R$ satisfies RSRC implies that there is a nonzero element ${\bf b}$ in the kernel of $\psi$. But this then means that $a^k{\bf b}$ is a nonzero element of the kernel of $\phi$. Therefore $S$ fulfills RSRC. 
\end{proof}

\begin{proof}[Proof of Corollary 0.3] Assume that $R$ satisfies RSRC. If $\sigma$ is injective, then $R[x; \sigma, \delta]$ satisfies RSRC by Theorem 0.2(ii). Moreover, if $\sigma$ is not injective, then Proposition 1.12 implies that $R[x; \sigma, \delta]$ must satisfy RSRC. 
\end{proof}

The following example illustrates that the hypothesis that $\sigma$ is an automorphism can also not be eliminated from Theorem 0.2(iv). 

\begin{example}{\rm Let $S$ be the ring with generators $u, v, x$ subject only to the relations $xu=xv=0$; in other words, $S:=\mathbb Z\langle u,v,x\rangle /(xu, xv)$. Note that the polynomial ring $P:=\mathbb Z[x]$ is a unital subring of $S$. We claim that $S$ satisfies LSRC. To show this, let $\phi:S^m\to S^n$ be a left $S$-module homomorphism with $m>n$. Our goal is to prove that $\phi$ cannot be monic, which will establish the claim. Define the left $P$-module homomorphism $\psi:P^m\to S^n$ by ${\bf p}\psi:=({\bf p}x)\phi$ for all ${\bf p}\in P^m$.  Since $Px$ is a right ideal in $S$, we have $\left (P^mx\right )\phi\subseteq P^nx$, implying that ${\rm Im}\ \psi\subseteq P^n$. Since $P$ satisfies LSRC (by, for example, Theorem 0.2(i)), we conclude that there is a nonzero element ${\bf p}\in P^m$ such that ${\bf p}\psi=0$. This makes ${\bf p}x$ a nonzero element of ${\rm Ker}\ \phi$, so that $\phi$ is not monic. 

Finally, we show that $S$ is an Ore extension of a ring that fails to satisfy LSRC. Notice that  $S\cong R[x;\sigma]$, 
where $R$ is the free ring with generators $u, v$ and $\sigma:R\to R$ is the endomorphism that maps each element of $R$ to the constant term of its normal form. Moreover, it is well known that $R$ fails to satisfy LSRC; see, for instance, \cite[Example 1.31]{Lam}. }
\end{example}

The endomorphism $\sigma$ in Example~1.13 is neither injective nor surjective. This suggests the question of whether either of these properties alone might be enough for part (iv) of Theorem~0.2. In Example~1.14, we show that injectivity on its own does not suffice. 
Whether surjectivity might be sufficient remains unanswered and appears as question (2) in Open Questions~1.20. 

\begin{example}{\rm  Let $R$ be a ring that fulfills LSRC, and write $S:={\rm UM}_{\mathbb N}(R)$. Furthermore, let $\sigma:S\to S$ be the ring monomorphism defined by 
\[\sigma(A)(i,j):=\begin{cases} A(0,0) & \text{if}\ (i,j)=(0,0)\\
0 & \text{\rm if}\ i=0\ \text{\rm and}\ j>0\\
 0 & \text{\rm if}\ j=0\ \text{\rm and}\ i>0\\
 A(i-1,j-1) & \text{\rm if}\ i,j>0\end{cases}\]
 for every matrix $A\in S$.
 
 By Lemma 1.3, $S$ does not satisfy LSRC. However, we maintain that the skew polynomial ring $S[x;\sigma]$ fulfills LSRC. 
To argue this, let $E$ be the element of $S$ such that $E(0,0)=1$ and $E(i,j)=0$ otherwise. We will utilize the nonunital subring $U$ of $S[x;\sigma]$ consisting of all elements of the form $\sum_{i=0}^k (r_iE)x^i$ with $r_i\in R$. Notice that the unit element of $U$ is $E$, and that there is a ring isomorphism from $U$ to the polynomial ring $R[t]$ mapping $(rE)x$ to $rt$ for every $r\in R$. Hence $U$ must satisfy LSRC. 
 
 Let $m>n$ be positive integers and $\phi:\left (S[x;\sigma]\right )^m\to \left (S[x; \sigma]\right )^n$ be a left $S[x;\sigma]$-module homomorphism. Define the left $U$-module homomorphism $\psi:U^m\to \left (S[x; \sigma]\right )^n$ by ${\bf u}\psi:=\left ({\bf u}x\right )\phi$ for all ${\bf u}\in U^m$. Observing that $Ux$ is a right ideal of $S[x;\sigma]$, we deduce that  $(U^mx)\phi\subseteq U^nx$. Thus we have ${\rm Im}\ \psi\subseteq U^n$. 
Because $U$ satisfies LSRC, there is a nonzero ${\bf u}\in {\rm Ker}\ \psi$, making ${\bf u}x$ a nonzero element of ${\rm Ker}\ \phi$. Therefore $S[x;\sigma]$ satisfies LSRC.}
 \end{example}
 
The ring $S$ in Example 1.14 is obviously not a domain. Indeed, it remains a mystery whether it may be possible to construct an example of a skew polynomial ring, or an Ore extension, that is a domain satisfying LSRC (that is, a left Ore domain) but whose coefficient ring violates the condition. This question is also included in Open Questions~1.20 (see question (3)). 

Below in Corollary 1.15, we present an application of Theorems 0.1 and 0.2 to the study of Weyl rings, an important family of noncommutative rings that arise in quantum mechanics and other contexts.  

\begin{definition}{\rm Let $R$ be a ring. The \emph{first Weyl ring} over $R$, denoted $W_1(R)$, is the $R$-ring with the presentation

\[W_1(R):=\langle x,y\ :\ xy - yx =1;\ \ rx=xr\ \mbox{and}\ ry=yr\ \mbox{for all}\ r\in R\rangle.\]

For any integer $n>1$, the \emph{$n$th Weyl ring} $W_n(R)$ over $R$ is defined by $W_n(R):=W_1\left (W_{n-1}\left (R\right )\right )$.}
\end{definition}

\begin{corollary} Let $\mathcal{P}$ denote one of the three properties: the rank condition, RSRC, or LSRC. Then a ring $R$ satisfies $\mathcal{P}$ if and only if $W_n(R)$ satisfies $\mathcal{P}$ for every $n\in \mathbb Z^+$.
\end{corollary}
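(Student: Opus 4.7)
The plan is to exhibit $W_1(R)$ as a two-step iterated Ore extension of $R$ in which both extending endomorphisms are the identity (hence automorphisms), and then invoke Theorems~0.1 and 0.2 together with an induction on $n$.

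First I would observe that the polynomial ring $R[y]$ is just the Ore extension $R[y;\operatorname{id}_R, 0]$, and that $W_1(R)$ coincides with the Ore extension $R[y][x;\operatorname{id}_{R[y]},\partial]$, where $\partial:R[y]\to R[y]$ is the ordinary formal derivative. Indeed, $\partial$ is an $\operatorname{id}_{R[y]}$-derivation (since the Leibniz rule with $\sigma=\operatorname{id}$ is the usual product rule), and the Ore multiplication rule gives $xy = \operatorname{id}(y)x+\partial(y) = yx+1$, together with $xr = rx$ for all $r\in R$; these are precisely the defining relations of $W_1(R)$, and the universal property then supplies a ring isomorphism between the two constructions.

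Next I would apply Theorems~0.1 and 0.2 twice. In the first application, with coefficient ring $R$ and $(\sigma,\delta)=(\operatorname{id}_R,0)$, we get that $R[y]$ satisfies $\mathcal{P}$ if and only if $R$ does; crucially, $\sigma=\operatorname{id}_R$ is an automorphism, so parts (ii) and (iv) of Theorem~0.2 are available. In the second application, with coefficient ring $R[y]$ and $(\sigma,\delta)=(\operatorname{id}_{R[y]},\partial)$, we get that $W_1(R)$ satisfies $\mathcal{P}$ if and only if $R[y]$ does, again without the automorphism caveat causing any trouble. Chaining the two equivalences gives the $n=1$ case: $W_1(R)$ satisfies $\mathcal{P}$ iff $R$ does.

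Finally, I would finish by induction on $n$. Assuming the statement for $n-1$, apply the $n=1$ case with the coefficient ring taken to be $W_{n-1}(R)$; this yields that $W_n(R)=W_1(W_{n-1}(R))$ satisfies $\mathcal{P}$ iff $W_{n-1}(R)$ does, which by the inductive hypothesis is equivalent to $R$ satisfying $\mathcal{P}$. The main (minor) obstacle is simply the bookkeeping of verifying that $W_1(R)$ really is the claimed iterated Ore extension; once that identification is made, everything else is an immediate invocation of Theorems~0.1 and 0.2, whose automorphism hypotheses are automatically satisfied because $\sigma$ is the identity at both stages.
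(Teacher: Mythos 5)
Your proposal is correct and follows essentially the same route as the paper: identify $W_1(R)$ with the iterated Ore extension $R[y][x;\operatorname{id}_{R[y]},\partial]$, apply Theorems~0.1 and 0.2 (all parts being available since the relevant endomorphisms are identities, hence automorphisms), and reduce the general case to $n=1$ by induction. The paper's proof is just a terser version of exactly this argument.
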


\begin{proof} The general statement will follow from the case where $n=1$. For this case, we will use the fact that $W_1(R)=R[y][x; I_{R[y]}, \delta]$, where $I_{R[y]}:R[y]\to R[y]$ is the identity map and $\delta:R[y]\to R[y]$ is the standard differentiation map. The conclusion can then be deduced by applying Theorems~0.1 and 0.2. 
\end{proof}

\begin{remark} {\rm We point out that Corollary 1.15 is not really a new result. The case of the corollary for the rank condition is proved in \cite[Corollary 3.9]{LO} by invoking the main result in that paper as well as the standard $\mathbb Z$-grading on $W_1(R)$. Moreover, the cases for RSRC and LSRC can also be easily deduced by employing the $\mathbb Z$-grading and appealing to \cite[Proposition A]{KropLor}.}
\end{remark}

Next we briefly address the properties of direct and stable finiteness. We present first an example that shows that these two properties are not always preserved by Ore extensions.
\begin{example}{\rm We make use of a particular construction of an Ore extension that we learned from S. Deprez \cite{Deprez}. For this, we define the ring endomorphism $\sigma: \mathbb Z[y]\to \mathbb Z[y]$ by $\sigma(\sum_{i=0}^n a_iy^i):=a_0$. Also, define the map $\delta:\mathbb Z[y]\to \mathbb Z[y]$ by $\delta(
\sum_{i=0}^n a_iy^i):=\sum_{i=1}^n a_iy^{i-1}$. It is easy to check that $\delta$ is a $\sigma$-derivation. 

Now consider the Ore extension $S:=\mathbb Z[y][x;\sigma,\delta]$. We claim that $S$ is not directly finite, even though $\mathbb Z[y]$ is stably finite. To verify this, notice that
\[xy=\sigma(y)x+\delta(y)=1.\]
Since $yx\neq 1$, this means that $S$ fails to be directly finite.}
\end{example}

Although they are not preserved by Ore extensions in general, both direct and stable finiteness are preserved by forming skew polynomial rings and, more generally, skew power series rings (Theorem 0.4). We will deduce this from Proposition 0.5 from the introduction, proved directly below. 

\begin{proof}[Proof of Proposition 0.5] For any element $p\in R[[x;\sigma]]$ and $i\in \mathbb N$, we let $\pi_i(p)$ denote the coefficient of $x^i$ in the canonical representation of $p$. Let $e\in R[[x; \sigma]]$ such that $e^2=e$ and $\pi_0(e)=1$. For each $i\in \mathbb N$, set $r_i:=\pi_i(e)$. 
Note that
\begin{displaymath}
	e=e^2= \left( 1 + \sum_{i\geq 1} r_i x^i \right)^2
	= 1 + \sum_{i\geq 1} 2r_i x^i + \sum_{i,j \geq 1} r_i \sigma^i(r_j) x^{i+j}.
\end{displaymath}

Seeking a contradiction, suppose that $S:=\{i \geq 1 \mid r_i \neq 0\}$ is non-empty.
Let $k$ be the smallest number in $S$.
We then get that
$r_k=\pi_k(e)=\pi_k(e^2)=2 r_k$.
Hence $r_k=0$, which yields a contradiction.
This shows that $e=r_0=1$.
\end{proof}

For the proof of Theorem 0.4, we also require the following lemma; its proof is an easy exercise.

\begin{lemma} Let $R$ be a ring and $\sigma:R\to R$ a ring homomorphism. Let $n$ be a positive integer, and let $\sigma^\ast:M_n(R)\to M_n(R)$ be the ring homomorphism induced entrywise by $\sigma$. Then 
\[M_n\left (R[[x;\sigma]]\right )\cong M_n(R)[[x;\sigma^\ast]]\]
as rings.
\end{lemma}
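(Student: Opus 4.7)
The plan is to exhibit an explicit ``reorganization'' map and verify it is a ring isomorphism. I would define $\Phi : M_n(R[[x;\sigma]]) \to M_n(R)[[x;\sigma^\ast]]$ as follows: given a matrix $M$ whose $(i,j)$-entry has canonical representation $\sum_{k\geq 0} r_{ij,k} x^k$, set $\Phi(M) := \sum_{k\geq 0} A_k(M)\, x^k$, where $A_k(M) \in M_n(R)$ is the matrix with $(i,j)$-entry $r_{ij,k}$. Swapping the roles of the entry index and the power-series index produces a two-sided inverse, so $\Phi$ is a bijection of sets; additivity and the fact that $\Phi$ sends the identity matrix to $1$ are then immediate.

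The one substantive point is that $\Phi$ preserves multiplication. For this I would unwind the product $\Phi(M)\Phi(N)$ using the rule $x^i A = (\sigma^\ast)^i(A)\, x^i$ in $M_n(R)[[x;\sigma^\ast]]$, compare the $(p,q)$-entry of the coefficient of $x^k$ with the $x^k$-coefficient of the $(p,q)$-entry of $MN$ computed via $x^i r = \sigma^i(r)\, x^i$ in $R[[x;\sigma]]$, and observe that both expressions collapse to the same sum $\sum_{i+j=k}\sum_l r_{pl,i}\, \sigma^i(r'_{lq,j})$, where $r'_{lq,j}$ denotes the $j$-th coefficient of the $(l,q)$-entry of $N$. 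The crucial input is that $\sigma^\ast$ acts entrywise, so that the $(l,q)$-entry of $(\sigma^\ast)^i(A_j(N))$ is $\sigma^i(r'_{lq,j})$, which is precisely what is needed to align the entrywise multiplication on the left with the matrix multiplication on the right.

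The only real obstacle is the bookkeeping of four indices simultaneously (the summation variables $i, j, l$ and the matrix coordinates $p, q$); no conceptual difficulty intervenes, in line with the author's description of the verification as an easy exercise.
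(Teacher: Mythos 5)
Your proposal is correct, and since the paper leaves this lemma as an ``easy exercise'' with no written proof, your explicit reorganization map and the coefficient-matching computation $\sum_{i+j=k}\sum_{l} r_{pl,i}\,\sigma^{i}(r'_{lq,j})$ constitute exactly the intended standard argument. Nothing is missing.
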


\begin{proof}[Proof of Theorem 0.4] We prove the statement for direct finiteness; the one for stable finiteness will then follow from Lemma 1.17. The ``only if" statement follows from the fact that subrings of directly finite rings are directly finite. To prove the opposite implication, assume
that $R$ is directly finite. Let $p, q\in R[[x;\sigma]]$ such that $pq$=1. Then $qp$ is an idempotent with constant term $1$. Therefore $qp=1$ by Proposition 0.5.
\end{proof}

We can apply Theorem 0.4 to prove the following corollary.  

\begin{corollary} Let $R$ be a ring. Then $UM_{\mathbb N}(R)$ is directly (respectively, stably) finite if and only if $R$ is directly (respectively, stably) finite. 
\end{corollary}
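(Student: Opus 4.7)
The plan is to identify $UM_{\mathbb N}(R)$ with a skew power series ring and then invoke Theorem~0.4. Specifically, I would let $T := R^{\mathbb N}$ denote the direct product ring under componentwise operations, and let $\sigma : T \to T$ be the shift endomorphism
\[
\sigma(a_0, a_1, a_2, \ldots) := (a_1, a_2, a_3, \ldots).
\]
The central claim would be that $UM_{\mathbb N}(R) \cong T[[x;\sigma]]$ as rings.

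To construct the isomorphism, I would parametrize an upper triangular matrix $M$ by its super-diagonals: for each $k \in \mathbb N$, put
\[
D_k(M) := \bigl(M(0,k),\, M(1, k+1),\, M(2, k+2),\, \ldots\bigr) \in T,
\]
and send $M$ to $\sum_{k \geq 0} D_k(M)\, x^k \in T[[x;\sigma]]$. This map is clearly an additive bijection sending the identity matrix to~$1$. The real content is multiplicativity. A direct unwinding of matrix multiplication yields
\[
D_k(MN)_i \;=\; \sum_{a+b=k} D_a(M)_i \cdot D_b(N)_{i+a}.
\]
On the power-series side, the twisted rule $xt = \sigma(t)x$ forces the coefficient of $x^k$ in $\bigl(\sum_a D_a(M) x^a\bigr)\bigl(\sum_b D_b(N) x^b\bigr)$ to be $\sum_{a+b=k} D_a(M)\, \sigma^a\bigl(D_b(N)\bigr)$, whose $i$-th entry is exactly $\sum_{a+b=k} D_a(M)_i \cdot D_b(N)_{i+a}$. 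Hence the two multiplications agree.

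With the isomorphism established, the corollary reduces to two ingredients: (a) $T = R^{\mathbb N}$ is directly (respectively, stably) finite if and only if $R$ is; and (b) $T[[x;\sigma]]$ is directly (respectively, stably) finite if and only if $T$ is. Statement~(b) is precisely Theorem~0.4. Statement~(a) is routine: direct finiteness passes to arbitrary direct products componentwise, since $(a_i)(b_i) = 1_T$ forces $a_i b_i = 1_R$ for each $i$, whence $b_i a_i = 1_R$ by direct finiteness of $R$; stable finiteness transfers through the natural ring isomorphism $M_n(R^{\mathbb N}) \cong M_n(R)^{\mathbb N}$; and the reverse implications are immediate from the diagonal embedding $R \hookrightarrow R^{\mathbb N}$ as a unital subring.

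The only real obstacle is bookkeeping --- making sure $\sigma$ is applied the correct number of times so that the double sum in the matrix product aligns with the twisted Cauchy product in $T[[x;\sigma]]$. There is no conceptual difficulty once one sees that filtering an upper triangular matrix by its super-diagonals realizes $UM_{\mathbb N}(R)$ as a skew power series ring over $R^{\mathbb N}$ with respect to the shift.
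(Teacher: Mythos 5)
Your proposal is correct and follows essentially the same route as the paper: the authors' Proposition~1.19 establishes exactly your isomorphism $UM_{\mathbb N}(R)\cong \bigl(\prod_{\mathbb N}R\bigr)[[x;\sigma]]$ via the super-diagonal parametrization with the shift endomorphism, and their proof of the corollary then combines it with Theorem~0.4 and the fact that direct (respectively, stable) finiteness passes to direct products, just as you do.
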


To see how Corollary 1.18 follows from Theorem 0.4, we need the isomorphism described below. This isomorphism is straightforward to see and undoubtedly well known to experts. However, we are not aware of an explicit reference, so we provide a proof. 
 
\begin{proposition} Let $R$ be a ring and set $P:=\prod_{\mathbb N} R$. Define the ring endomorphism  $\sigma:P\to P$ by $\sigma\left (\left (r_i\right )_{i\in \mathbb N}\right ):=\left (r_{i+1}\right )_{i\in \mathbb N}$.
Furthermore, define 
 a map  $\theta: UM_{\mathbb N}(R)\to P[[x; \sigma]]$ by, for each $M\in  UM_\mathbb N(R)$, letting 
\[\theta(M):=\sum_{j=0}^\infty \left (M\left (i,i+j\right )\right )_{i\in \mathbb N}x^j.\] 
Then $\theta$ is a ring isomorphism. 
\end{proposition}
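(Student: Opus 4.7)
The plan is to verify the four properties: well-definedness, bijectivity, additivity, and multiplicativity, with the last requiring a careful comparison of the twisted product in $P[[x;\sigma]]$ with matrix multiplication in $UM_{\mathbb{N}}(R)$.

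First, I would note that $\theta$ is well-defined because each upper triangular matrix $M$ has unambiguous ``diagonals'' $(M(i,i+j))_{i\in\mathbb N}\in P$, one for each $j\in\mathbb N$, so the formal series $\sum_{j\geq 0}(M(i,i+j))_i\, x^j$ lies in $P[[x;\sigma]]$. Bijectivity is then immediate by constructing an explicit inverse: given $\sum_{j\geq 0} p_j x^j$ with $p_j=(p_j^{(i)})_{i\in\mathbb N}\in P$, define a matrix $N$ by $N(i,i+j):=p_j^{(i)}$ for $j\geq 0$ and $N(i,k):=0$ for $k<i$. This rule clearly provides a two-sided inverse to $\theta$. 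Additivity of $\theta$ follows at once, since both the matrix sum and the power series sum are computed componentwise along diagonals. Moreover, $\theta$ sends the identity matrix to the constant series $1$, so it preserves the unit element.

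The substantive step is to verify multiplicativity. Here the key identity is that, in $P[[x;\sigma]]$, one has $xp=\sigma(p)x$ for all $p\in P$, and consequently $(px^\ell)(qx^m)=p\,\sigma^\ell(q)\,x^{\ell+m}$. The crucial observation is that the shift $\sigma$ acts on a diagonal in exactly the way needed to align matrix multiplication: for any $N\in UM_{\mathbb N}(R)$,
\begin{equation*}
\sigma^\ell\bigl((N(i,i+m))_{i\in\mathbb N}\bigr)=(N(i+\ell,i+\ell+m))_{i\in\mathbb N}.
\end{equation*}
Given matrices $M,N\in UM_{\mathbb N}(R)$, I would compute the coefficient of $x^j$ in $\theta(M)\theta(N)$ using the displayed identity and sum over all splittings $j=\ell+m$:
\begin{equation*}
\sum_{\ell=0}^{j}\bigl(M(i,i+\ell)\bigr)_{i}\cdot \bigl(N(i+\ell,i+j)\bigr)_{i}
=\Bigl(\sum_{\ell=0}^{j}M(i,i+\ell)N(i+\ell,i+j)\Bigr)_{i\in\mathbb N}.
\end{equation*}
Independently, one computes the $(i,i+j)$-entry of $MN$. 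Upper triangularity of $M$ and $N$ restricts the summation index $k$ in $\sum_k M(i,k)N(k,i+j)$ to $i\leq k\leq i+j$, giving precisely the same expression after the substitution $k=i+\ell$. Hence $\theta(MN)=\theta(M)\theta(N)$.

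I expect the main subtlety to be bookkeeping: keeping track of the index shift induced by $\sigma^\ell$ and confirming that the range of summation coming from upper triangularity matches the one coming from splitting the exponent $j=\ell+m$. Once this correspondence is set out cleanly, everything else is routine.
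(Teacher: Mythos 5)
Your proposal is correct and follows essentially the same route as the paper: the paper likewise treats additivity and bijectivity as immediate and devotes its computation to multiplicativity, matching the coefficient of $x^j$ in $\theta(M)\theta(N)$ --- computed via the shift identity $\sigma^\ell\bigl((N(i,i+m))_{i}\bigr)=(N(i+\ell,i+\ell+m))_{i}$ and the splitting $j=\ell+m$ --- with the $(i,i+j)$-entry of $MN$. Your extra remarks on well-definedness, the explicit inverse, and unit preservation are fine but add nothing beyond what the paper leaves to the reader.
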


\begin{proof} The map $\theta$ is  plainly additive and bijective.  To see that it is also multiplicative, let $M, N\in UM_{\mathbb N}(R)$. Then

\begin{align*}\theta(M)\theta(N) = & \sum_{j=0}^\infty\left ( \sum_{k=0}^{j}\left (M\left (i,i+k\right )\right )_{i\in \mathbb N}\cdot \sigma^k\left (\left (N\left (i,i+j-k\right ) \right )_{i\in \mathbb N}\right )\right )x^j\\
= & \sum_{j=0}^\infty \left (\sum_{k=0}^{j}\left (M\left (i,i+k)\cdot N(i+k,i+j\right )\right )_{i\in \mathbb N}\right )x^j \\
= & \sum_{j=0}^\infty \left (\sum_{k=0}^{j}M\left (i,i+k)\cdot N(i+k,i+j\right )\right )_{i\in \mathbb N}x^j\\
= & \sum_{j=0}^\infty \left (\left (MN\right )(i,i+j)\right )_{i\in \mathbb N}x^j\\
= & \theta(MN). 
\end{align*} 
\end{proof}

\begin{proof}[Proof of Corollary 1.18] We prove the statement for direct finiteness; the one for stable finiteness follows by an analogous argument. The ``only if" direction follows from the fact that subrings of directly finite rings are directly finite. For the opposite implication, assume that $R$ is directly finite. Then $\prod_{\mathbb N} R$ is plainly directly finite. Invoking Theorem~0.4 and Proposition~1.19, we conclude that $UM_{\mathbb N}(R)$ is directly finite. 
\end{proof}

\begin{remark} {\rm Corollary 1.18 can alternatively be proved by considering the Jacobson radical of $UM_{\mathbb N}(R)$, which can be easily seen to consist of all the matrices whose diagonal entries lie in the Jacobson radical of $R$. 
An appeal to \cite[Lemma 2]{Montgomery}, then, completes the argument.
}
\end{remark}

\begin{remark}{\rm Clearly, there is also a result corresponding to Corollary~1.18 for lower triangular matrices.} \end{remark}

As promised, we conclude the paper with a list of questions that still remain open about RSRC, LSRC, direct finiteness, and stable finiteness. 

\begin{openquestions}{\rm  Let $R$ be a ring, $\sigma:R\to R$ a ring endomorphism, and $\delta:R\to R$ a $\sigma$-derivation. 
\begin{enumeratenum}
\item If $R$ satisfies RSRC and $\sigma$ is surjective, must $R[x; \sigma, \delta]$ satisfy RSRC?
\item  If $R[x; \sigma, \delta]$ satisfies LSRC and $\sigma$ is surjective, must $R$ satisfy LSRC?
\item If $R$ is a domain, $\sigma$ is injective, and $R[x; \sigma, \delta]$ satisfies LSRC, must $R$ also satisfy LSRC?
\item If $R$ is directly finite and $\sigma$ is an automorphism, must $R[x; \sigma, \delta]$ be directly finite?
\item  If $R$ is stably finite and $\sigma$ is an automorphism, must $R[x; \sigma, \delta]$ be stably finite?
\end{enumeratenum}}
\end{openquestions}

\vspace{10pt}

\noindent {\sc Karl Lorensen}\\
Department of Mathematics and Statistics\\
Pennsylvania State University, Altoona College\\
Altoona, PA 16601, USA\\
E-mail: {\tt kql3@psu.edu}
\vspace{10pt}

\noindent {\sc Johan \"Oinert}\\
Department of Mathematics and Natural Sciences\\
Blekinge Institute of Technology\\
SE-37179 Karlskrona, Sweden\\
E-mail: {\tt johan.oinert@bth.se}
\vspace{3pt}

\noindent {\it and}
\vspace{3pt}

\noindent Department of Engineering\\
University of Sk\"{o}vde\\
SE-54128 Sk\"{o}vde, Sweden

\end{document}